\numberwithin{equation}{section}
\theoremstyle{plain}
\newtheorem{thm}{Theorem}[section]
\newtheorem{deff}{Definition}[section]
\newtheorem{lem}{Lemma}[section]
\newtheorem{pro}{Proposition}[section]
\newtheorem{rem}{Remark}[section]
\newtheorem{cor}{Corollary}[section]
\newtheorem{con}{Condition}[section]
\newcommand\independent{\protect\mathpalette{\protect\independenT}{\perp}}
\def\independenT#1#2{\mathrel{\rlap{$#1#2$}\mkern2mu{#1#2}}}
\begin{document}

\begin{frontmatter}
\title{Consistent Model Selection of Discrete Bayesian Networks from Incomplete Data}
\runtitle{Model Selection of Discrete Bayesian Networks}

\begin{aug}
\author{\fnms{Nikolay H.} \snm{Balov}\ead[label=e1]{nikolay\_balov@urmc.rochester.edu}}

\address{Department of Biostatistics and Computational Biology \\
University of Rochester Medical Center, Rochester, NY-14642 \\
\printead{e1}}

\runauthor{N.H. Balov}
\affiliation{University of Rochester}

\end{aug}

\begin{abstract}
A maximum likelihood based model selection of discrete Bayesian networks is considered. The structure learning is performed by employing a scoring function $S$, which, for a given network $G$ and $n$-sample $D_n$, is defined as the maximum marginal log-likelihood $l$ minus a penalization term $\lambda_n h$ proportional to network complexity $h(G)$,
$$
S(G|D_n) = l(G|D_n) - \lambda_n h(G).
$$
An available case analysis is developed with the standard log-likelihood replaced by the sum of sample average node log-likelihoods. The approach utilizes partially missing data records and allows for comparison of models fitted to different samples. 

In missing completely at random settings the estimation is shown to be consistent if and only if the sequence $\lambda_n$ converges to zero at a slower than $n^{-{1/2}}$ rate. In particular, the BIC model selection ($\lambda_n=0.5\log(n)/n$) applied to the node-average log-likelihood is shown to be inconsistent in general. This is in contrast to the complete data case when BIC is known to be consistent. The conclusions are confirmed by numerical experiments. 
\end{abstract}

\begin{keyword}[class=AMS]
\kwd[Primary ]{62F12}
\kwd[; secondary ]{62H12}
\end{keyword}

\begin{keyword}
\kwd{Bayesian networks}
\kwd{categorical data}
\kwd{model selection}
\kwd{penalized maximum likelihood}
\kwd{missing completely at random}
\end{keyword}

\tableofcontents

\end{frontmatter}

\section{Introduction} 

The continuing interest in developing sparse statistical models, with the notable presence of Bayesian networks among them, is well motivated by a number of pressing practical problems coming from gene/protein expression analysis and medical imaging, to mention a few. Although graphical probability models based on directed connections between random variables provide efficient joint distribution description, the application of such models is often limited by the ambiguity of their observed behavior which makes the learning rather difficult. 

One of the prevailing approaches to graphical model selection is through optimization of some scoring functions. In the context of Bayesian networks, the usual choice is the log of posterior. Let $(G,\theta)$ be a Bayesian network with graph structure $G$ and probability model parameter $\theta\in\Theta$. Following the Bayesian paradigm (see for example \cite{cooper} and \cite{spiegelhalter}), one specifies prior probability distributions $\pi$ for $G$ and $\theta$. Then, for a sample $D_n$ of size $n$, one considers the Bayesian scoring function  
$$
S(G|D_n) = \log\\ \pi(G) + \log\\ \mathcal{L}(G|D_n),
$$
where 
$$
\mathcal{L}(G|D_n) = \int_{\theta\in\Theta} \mathcal{L}(G,\theta|D_n) \pi(\theta)d\theta
$$
is the so-called marginal likelihood of $G$, while $\mathcal{L}(G,\theta|D_n)$ is the usual likelihood of $(G,\theta)$. 
The Bayesian scoring function measures the posterior certainty under the chosen prior system and the model with maximum score is thus a natural estimator. 

The main virtue of the Bayesian approach is in counter-balancing the tendency of the maximum likelihood estimation to choose the most complex model fitting the data. 
As first noticed by \cite{schwartz}, when the probability parameter space $\Theta$ constitutes an exponential family in an Euclidean space, the marginal log-likelihood of a model $M$ admits the approximation 
$$
\log \mathcal{L}(M|D_n) = \textrm{BIC}(M|D_n) + O_p(1),
$$
based on the so-called Bayesian Information Criterion (BIC), 
$$
\textrm{BIC}(M|D_n) \equiv \log\\ \mathcal{L}(M,\hat\theta_M|D_n) - 0.5 \log(n) \textrm{ dim}(M), 
$$
where $\hat\theta_M$ is the value of $\theta$ that maximizes the log-likelihood for given $M$ and $D_n$, and $\textrm{ dim}(M)$ is the dimension of $M$. 
The immediate application of this result to discrete and conditional Gaussian Bayesian networks was postponed because of the non-Euclidean structure of the parameter space for these models. This obstacle was later overcome by \cite{haughton}, who showed the validity of the BIC approximation for a much large family of curved exponential distributions.  

In a later work, \cite{geiger} applied this result to several families of Bayesian network models including the discrete ones, thus showing the asymptotic consistency of BIC. In its generality, the parameter space $\Theta_G$ of a discrete Bayesian network $G$ comprises a collection of multinomial distributions and the total number of parameters needed to specify them is what is understood as dimension of $\Theta_G$. The BIC approximation is then expressed as 
\begin{equation}
\label{eq:BIC}
\log \mathcal{L}(G|D_n) = \log\\ \mathcal{L}(G,\hat\theta_G|D_n) - 0.5 \log(n) \textrm{ dim}(\Theta_G) + O_p(1).
\end{equation}

Equation \eqref{eq:BIC} suggests a more direct estimating procedure - selecting a model $G$ in $\mathcal{G}$ with maximal BIC score. There are two typical arguments in favor of this route versus the Bayesian one. The first one is methodological - prior based inference is not universally accepted. The other one is computational - calculating marginal likelihoods can be prohibitive, especially so in the framework of large dimensional graphical models. 

These observations have motivated us to pursue the latter, non-Bayesian approach - maximum likelihood estimation  followed by model selection according to some scoring criteria. To generalize it, we reformulate the right-hand side of \eqref{eq:BIC} and consider the following estimation problem 
\begin{equation}
\label{eq:GBIC}
\hat G = arg\max_{G\in\mathcal{G}} \{ n^{-1} \log\\ \mathcal{L}(G,\hat\theta_G|D_n) - \lambda_n h(G) \}, 
\end{equation}
where $\lambda_n$ is some positive sequence and $h$ is a function measuring the complexity of $G$. The class of problems \eqref{eq:GBIC} is known as extended (or penalized) likelihood approach \cite{buntine}. Typical penalization parameters are $\lambda_n=0.5n^{-1}\log(n)$ (BIC) and $\lambda_n=n^{-1}$ (AIC), while $dim(\Theta_G)$ is a usual choice for $h$. We briefly remark that, in order to be useful in practice, the estimation problem \eqref{eq:GBIC} relies on two assumptions: (1) for a fixed $G$, the MLE $\hat\theta_G$ can be easily found, and (2), the set of networks $\mathcal{G}$ is not prohibitively large, which usually requires imposing some network structure restrictions. In this paper however, we are mainly concerned with the theoretical aspects of \eqref{eq:GBIC} - to our knowledge, the consistency properties of $\hat G$ are not investigated in presence of missing values - and present results which are relevant to all estimation algorithms involving penalized log-likelihood of this form.

The paper contributes in three main directions. First, in order to more efficiently handle data with incomplete records, we modify the scoring based model selection \eqref{eq:GBIC} by replacing the log-likelihood function with what we tentatively call node-average log-likelihood (NAL) - a sum of sample average node log-likelihoods relative to the node parents. 
The NAL statistics utilizes partially incomplete sample records instead of discarding them and provides means for comparing models fitted to different samples. We argue that when the number of nodes is large in comparison to the parent sizes, the NAL-based estimation achieves efficiency close to that of the computationally more demanding Expectation Maximization (EM) procedure \cite{lauritzenEM}. 
Second, we focus on missing completely at random data models for they essentially guarantee network identifiability. More general missing at random mechanisms, in most cases, obscure the underlying network structure and render the network unidentifiable. 
Third, we generalize the scoring criteria by allowing the complexity measure $h$ to be any positive function (as long as it is increasing for $G$ as defined later) and a continuum of penalization parameters $\lambda_n=O(n^{-\alpha})$ by specifying a range of possible values $\alpha$ for which the estimation is consistent. 

In Section \ref{ch:formulation} we introduce the notion of node-average log-likelihood and describe the model selection problem in the context of Bayesian networks. Then, Section \ref{ch:model_selection}, we consider the question of network identifiability and formulate consistency in terms of scoring criteria. For the latter we follow \cite{haughton} and \cite{chickering}. 
We show in Section \ref{sec:ident} that if the data is missing completely at random, the identifiability arises under some natural conditions. 
Section \ref{ch:consistent_scores} presents the main result in this paper, Theorem \ref{th:consistency}, claiming that the estimation is asymptotically consistent provided that $\lambda_n$ goes to zero at slower rate than $n^{-1}$, in the complete data case, and $n^{-1/2}$, in presence of missing data. We also show the necessity of the later in missing completely at random settings. Thus, the inconsistency of AIC is (re)confirmed along with somewhat unexpected conclusion regarding the BIC criteria - in the context of NAL optimization, BIC is consistent when applied to complete data but inconsistent otherwise. In Section \ref{ch:experiments} we present some numerical results in confirmation of the theory which are carried out with the {\it catnet} package for {\bf R}. We conclude with a short discussion on possible extensions of the presented approach beyond the class of discrete Bayesian networks. 

\section{Problem formulation and motivation}\label{ch:formulation}

\subsection{Basic definitions}

Let ${\bf X}=(X_i)_{i=1}^N$ be a $N$-vector of discrete random variables. 
Any directed acyclic graph (DAG) $G$ with nodes ${\bf X}$ is a collection of directed edges from parent to child nodes such that there are no cycles. We denote with $Pa_i$ the parents of node $X_i$ in $G$; then $G$ is completely described by the parent sets $\{Pa_i\}_{i=1}^N$. 
The set of all DAGs with nodes ${\bf X}$ admits partial ordering. 
We say that $G_1$ is included in $G_2$ and write $G_1\subseteq G_2$ if all directed edges of $G_1$ are present in $G_2$ as well.
An element $G$ of a set of DAGs $\mathcal{G}$ is called minimal if there is no $\tilde G\in\mathcal{G}$ such that $\tilde G \subset G$; similarly defined are maximal DAGs. In a set of nested DAGs, the minimum and maximum DAG are always uniquely defined. 

Discrete Bayesian network (DBN) on ${\bf X}$ is any pair $(G, P)$ consisting of DAG $G$ and probability distribution $P$ on ${\bf X}$ subject to two conditions:

(1) the joint distribution of {\bf X} given by $P$ satisfies the so-called local Markov property (LMP) with respect to $G$ - any node-variable is independent of its non-descendants given its parents,

(2) $G$ is a minimal DAG compatible with $P$, that is, there is no $\tilde G\subset G$ such that $P$ satisfies LMP with respect to $\tilde G$. 

For any DAG $G$, there is an order of its nodes, called causality order, such that the parents of each one appear earlier in that order. We say that $G$ is compatible with an order $\Omega$ if $Pa_{\Omega(1)}=\emptyset$ and for all $i=2,...,N$, $Pa_{\Omega(i)} \subset \{X_{\Omega(1)},...,X_{\Omega(i-1)}\}$. For $i<j$, we denote with $X_{\Omega(i)}\prec X_{\Omega(j)}$ the fact that $X_{\Omega(i)}$ appears before $X_{\Omega(j)}$ in the order $\Omega$. 

In its generality, the discreteness of our model implies that for each state $x_{Pa_i}$ of the parents of $X_i$, the probability distribution of $X_i$ conditional on $Pa_i=x_{Pa_i}$ is multinomial. Moreover, the conditional probability tables $\{P(X_i|Pa_i)\}_{i=1}^N$ fully specify the joint distribution of {\bf X}. 
Indeed, let $G$ be compatible with an order $\Omega$, that is, $X_{\Omega(1)}\prec X_{\Omega(2)} \prec ... \prec X_{\Omega(N)}$. 
Then, taking into account the LMP, it is evident that with respect to $G$, the joint probability distribution permits the factorization 
$$
P({\bf X}) = \prod_{i=1}^N P(X_{\Omega(i)} | X_{\Omega(1)},...,X_{\Omega(i-1)}) = \prod_{i=1}^N P(X_{\Omega(i)} | Pa_{\Omega(i)}) = \prod_{i=1}^N P(X_i | Pa_i). 
$$
Depending on the context, in a pair $(G, P)$, we shall refer to $P$ either as a joint distribution, such as in the left-hand side of the above display, or as a set of conditional probability tables, as in the right-hand side above. 

For any DAG $G$, there is a maximal set $\mathcal{I}(G)$ of (structural) conditional independence relations of the form $(A \independent B | C)$, for $A,B,C\subset {\bf X}$ and $A,B\ne\emptyset$, determined by LMP \cite{pearl}. On the other hand, $P$ also defines a set of (distributional) independence constraints on ${\bf X}$. Condition (2) in the definition of DBN is needed for assuring that the sets of structural and distributional independence statements in fact coincide. We say that two DAGs $G_1$ and $G_2$ are equivalent and write $G_1\cong G_2$ if $\mathcal{I}(G_1) = \mathcal{I}(G_2)$. With $[G]$ we shall denote the class of DAGs equivalent to $G$. Necessary and sufficient conditions for DAG equivalence can be found in \cite{verma, chickering}. 
We call two DBNs $(G_1, P_1)$ and $(G_2, P_2)$ equivalent if their joint distributions are equal, $P_1=P_2$, which implies equivalence between their graph structures, $G_1\cong G_2$. The essential problem of BN learning is the recovery of the equivalence class $[G]$ from data. 

Another useful notion is that of network complexity. The complexity of a DBN $G$ is typically measured by the number of parameters $df(G)$ needed to specify the conditional probability table of $G$. Let $q(X_i)$ be the number of states, or discrete levels, of $X_i$ and $q(Pa_i) = \prod_{X\in Pa_i} q(X)$ be the number of states of the parent set $Pa_i$. Since for every state of $Pa_i$, $q(X_i)-1$ parameters are needed to define the corresponding multinomial distribution for $X_i$, we have $df(G) = \sum_{i=1}^N q(Pa_i)(q(X_i)-1)$. 

Next we formulate the maximum likelihood estimation (MLE) in the context of DBNs. 
Let $D_n=\{x^{s}\}_{s=1}^{n}$ be a sample of $n$ independent observations on the vector ${\bf X}$. 
Then, the log-likelihood of a DBN $(G,P)$ with respect to $D_n$ is 
\begin{equation}\label{eq:likelihood}
\log \mathcal{L} (G,P|D_n) = \sum_{i=1}^N \sum_{s=1}^n \log P(X_i=x_{i}^s|Pa_i=x_{Pa_i}^s), 
\end{equation}
where $x_i^s$ and $x_{Pa_i}^s$ are the states of $X_i$ and its parent set $Pa_i$ in the $s$-th record $x^s$. According to the ML principle, a DBN estimator can be obtained by maximizing \eqref{eq:likelihood}. Before proceeding with the inference in presence of missing values we need to introduce some useful statistics and convenient notations. 

We write $k\in X_i$ to index the states of $X_i$ and adopt a multi-index notation, $j\in Pa_i$, for the parent configurations of $X_i$. 
Let $1_{i,kj}$ be the indicator function of the event $(X_i = k, Pa_i = j)$. 
For a given sample $D_n$ let us define the counts $n_{i,kj} \equiv \sum_{s=1}^n 1_{i,kj}(x^s)$, $n_{i,j} \equiv \sum_{k\in X_i} n_{i,kj}$ and $n_i \equiv \sum_{j\in Pa_i} n_{i,j}$. 
A record $x^s$ in $D_n$ we shall call incomplete if some of the values $x_i^s$ are missing. 
By convention, if the value of $X_i$ in $x^s$ is missing, then $1_{i,kj}(x^s) = 0$, while if some of the parents in $Pa_i$ are missing, then both $1_{i,kj}(x^s) = 0$ and $1_{i,j}(x^s) = 0$. It is always the case then that $n_i \le n$. 
We shall consider an inference framework using the counts $n_{i,j}$ and $n_{i,kj}$ as statistics summarizing the information in the sample $D_n$.  

Let ${\bf Z} = (Z_i)_{i=1}^N$ be a binary random vector such that $Z_i=1$ if $X_i$ is observed and $Z_i=0$ if it is missing. For an index set $A$ we define $Z_A = \prod_{i\in A} Z_i$. The joint distribution of $({\bf X}, {\bf Z})$ describes all incomplete samples $D_n$ of observations on ${\bf X}$. 

Let us introduce the probabilities $\theta_i$, $\theta_{i,j}$ and $\theta_{i,kj}$ as 
$$
\theta_i \equiv P(Z_i=1, Z_{Pa_i}=1) 
$$
$$
\theta_{i,j} \equiv P(Pa_i=j | Z_i = 1, Z_{Pa_i}=1),
$$
\begin{equation}\label{def:thetas}
\theta_{i,kj} \equiv P(X_i=k | Pa_i=j, Z_i = 1, Z_{Pa_i}=1). 
\end{equation}
With $\theta$ we shall denote the set $\{\theta_i, \theta_{i,j}, \theta_{i,kj}\}_{i,k,j}$ and call it observed conditional probability table of $G$. 

For a sample of fixed size $n$, the random variables $n_i$ and the random vectors $\{n_{i,j}\}_{j\in Pa_i}$ and $\{n_{i,kj}\}_{k\in X_i}$ then satisfy 
$$
n_i|n \sim Binom(\theta_i, n)
$$
$$
\{n_{i,j}\}_j|n_i \sim Multinom(\{\theta_{i,j}\}_j, n_i)
$$
\begin{equation}\label{eq:thetas}
\{n_{i,kj}\}_k|n_{i,j} \sim Multinom(\{\theta_{i,kj}\}_k, n_{i,j}).
\end{equation}
Therefore, as long as $n_i$, $n_{i,j}$ and $n_{i,kj}$ are of interest, the table $\theta$ is all we need to know about the DBN and the mechanism of missingness. 

The usual point estimators of $\theta_i$, $\theta_{i,kj}$ and $\theta_{i,j}$ are 
$$
\hat{\theta}_{i} = \frac{n_{i}}{n} \textrm{, }
\hat{\theta}_{i,j} = \frac{n_{i,j}}{n_i} \textrm{, }
\hat{\theta}_{i,kj} = \frac{n_{i,kj}}{n_{i,j}} .
$$
We shall denote the conditional table defined by $\hat{\theta}$'s with $\hat{\theta}(G| D_n)$ to emphasize that it is estimated for the DAG $G$ from the sample $D_n$. 
The statistics $\hat{\theta}_{i,j}$ and $\hat{\theta}_{i,kj}$ are unbiased estimators of $\theta_{i,j}$ and $\theta_{i,kj}$, respectively 
\begin{equation}\label{eq:thetas_exp}
E \hat{\theta}_{i,j} = E_{n_{i}} E(\frac{n_{i,j}}{n_{i}}|n_{i}) = \theta_{i,j} \textrm{, } E \hat{\theta}_{i,kj} = E_{n_{i,j}} E(\frac{n_{i,kj}}{n_{i,j}}|n_{i,j}) = \theta_{i,kj}.
\end{equation}

The missing data distribution usually belongs to one of the following categories:  

(i) The data is missing completely at random (MCAR) when the missing probabilities are unrelated to either the observed or the unobserved values. In this case ${\bf Z}$ is independent of ${\bf X}$ and we have $\theta_{i,j} = P(Pa_i=j)$ and $\theta_{i,kj} = P(X_i=k | Pa_i=j)$. 

(ii) The data is missing at random (MAR) when the missing probabilities depend on the observed values but not on the unobserved ones. Let us consider a special case of MAR when for each $i$, there is $C_i\subset {\bf X}$ such that $X_i\notin C_i$, $C_i \cap Pa_i = \emptyset$  and $(Z_i, Z_{Pa_i})$ is independent of $(X_i,Pa_i)$ given $C_i$. If furthermore $C_i$ has no descendants of $X_i$, then, by application of LMP, 
$\theta_{i,kj} = P(X_i=k | Pa_i=j)$ holds. For a general MAR however the latter may not be true. 

(iii) If the missing probabilities depend on the unobserved values we have not missing at random (NMAR) case and then neither $\theta_{i,j} = P(Pa_i=j)$ nor $\theta_{i,kj} = P(X_i=k | Pa_i=j)$ hold anymore. 

As we discuss in Section \ref{sec:ident}, the missing data distribution is implicated in network identifiability. In this regard, the MCAR model is the most transparent one for it does not interfere with the network topology. 

\subsection{Node-average log-likelihood}

We consider two objective functions for estimating DBNs based on the log-likelihood \eqref{eq:likelihood}. The first one is the sample average log-likelihood 
$$
\tilde{l} (G|D_n) = \frac{1}{n} \sum_{i=1}^N \sum_{j\in Pa_i} \sum_{k\in X_i} n_{i,kj} \log \hat{\theta}_{i,kj}
$$
\begin{equation}
\label{eq:totalloglik}
= \sum_{i=1}^N \sum_{j\in Pa_i} \frac{n_{i,j}}{n} \sum_{k\in X_i} \hat{\theta}_{i,kj} \log \hat{\theta}_{i,kj}. 
\end{equation}
When the data has no missing values we have $n\tilde{l} (G|D_n) = \max_{\theta} \log \mathcal{L} (G,\theta|D_n)$. 

The second objective function is the sum of sample average node log-likelihoods
$$
l (G|D_n) 
 = \sum_{i=1}^N \frac{1}{n_{i}} \sum_{j\in Pa_i} \sum_{k\in X_i} n_{i,kj} \log \hat{\theta}_{i,kj}
$$
\begin{equation}
\label{eq:avrloglik}
= \sum_{i=1}^N \sum_{j\in Pa_i} \hat{\theta}_{i,j} \sum_{k\in X_i} \hat{\theta}_{i,kj} \log \hat{\theta}_{i,kj} 
= \sum_{i=1}^N l(X_i|Pa_i,D_n), 
\end{equation}
where $l(X_i|Pa_i,D_n) \equiv \sum_{j\in Pa_i} \hat{\theta}_{i,j} \sum_{k\in X_i} \hat{\theta}_{i,kj} \log \hat{\theta}_{i,kj}$ is known as negative conditional entropy of node $X_i$. 
Hereafter, we drop the qualifier `sample average' from \eqref{eq:totalloglik} and \eqref{eq:avrloglik} and call \eqref{eq:avrloglik} node-average log-likelihood (NAL). 

If $D_n$ is a complete sample, then for every $i$, $n_i = \sum_{j\in Pa_i} n_{i,j} = n$. Hence $\hat\theta_{i,j}\hat\theta_{i,kj} = n_{i,kj}/n$ and consequently $\tilde{l}(G|D_n) = l(G|D_n)$. 
If the data is incomplete however, we may have $n_i < n$ and then \eqref{eq:totalloglik} and \eqref{eq:avrloglik} will be different. 
In the latter case, the log-likelihood \eqref{eq:totalloglik} may have imbalanced representation of the potential parent sets. For example, if for two different parent sets $Pa_i$ and $Pa_i'$ of the i-th node $n_i(Pa_i') < n_i(Pa_i)$, then $Pa_i'$ might be preferably selected due to the smaller size of the subsample that represents it in \eqref{eq:totalloglik} even when $Pa_i'$ has worse fit than $Pa_i$, i.e. $l(X_i|Pa_i',D_n) < l(X_i|Pa_i,D_n)$. 
The simplest solution to this problem - discarding all incomplete records in the sample - may drastically reduce the effective sample size. On the other hand, \eqref{eq:avrloglik} can utilize all $n_i$ sample records for estimation of $\theta_{i,kj}$'s. Essentially, NAL exploits the decomposable nature of the log-likelihood \eqref{eq:totalloglik} and, by adjusting for the sample size, allows comparison of models fitted to different samples. We mention that, similarly, NAL can be adopted in other decomposable log-likelihood based models. 

It can be easily demonstrated that the maximum likelihood principle alone is inefficient for estimating DBNs. Let us assume for simplicity that $\mathcal{G}$ comprises all DBNs with node order compatible with the index order, $X_i\prec X_2 \prec ...\prec X_N$. The maximum NAL equation, $\hat G = arg\max l(G|D_n)$, will then result in the following estimates for the parents set $Pa_i$ 
$$
\hat{P}a_i = arg\max_{Pa_i \subseteq \{1,...,i-1\}} \hat{\theta}_{i,j} \sum_{k\in X_i} \hat{\theta}_{i,kj} \log \hat{\theta}_{i,kj}.
$$
From the increasing property of the conditional log-likelihood (see Lemma \ref{lemma:lincrease} below) it follows that the solution of the above equation is $Pa_{i}= \{1,...,i-1\}$, for every $i>1$. Thus, the MLE solution will be the most complex DBN in $\mathcal{G}$ and will overestimate the true $G$. 
In the remainder of this paper we shall investigate more closely the properties of NAL-based estimation in a model selection context and shall provide criteria for asymptotically consistent estimation. 

\subsection{Relation between NAL maximization and EM algorithm}
\label{ch:em}

In missing data settings, the standard way to utilize all of the available data is to apply an EM algorithm - see \cite{lauritzenEM} for application of EM to Bayesian networks.
For a sample $D_n$ let $D_n^{obs}$ be the observed part of the data. The EM algorithm involves the following conditional expectation 
$$
Q(G,P|G',P') \equiv E(\log P(D_n|G,P) | D_n^{obs}, G', P')
$$
$$
= \sum_{i=1}^N \sum_{j\in Pa_i, k\in X_i} E( \sum_{s=1}^n 1_{\{x_i^s =k, x_{Pa_i}^s = j\}} | D_n^{obs}, G', P')\log P_{i,kj},
$$
where $P_{i,kj} = P(X_i=k|Pa_i=j)$. 
Finding $Q$ implements the E-step of the algorithm. The M-step maximizes $Q(G,P|G',P')$ for $G$ and $P$. Solutions of the EM algorithm are all $(\hat G, \hat P)$ such that 
$Q(\hat G, \hat P|\hat G, \hat P) = \max_{G, P}Q(G, P|\hat G, \hat P)$. 

It can be shown that NAL maximization is equivalent to solving a sub-optimal EM algorithm with $\sum_{s=1}^n 1_{\{x_i^s =k, x_{Pa_i}^s = j\}}$ replaced by the sum $n_{i,kj} + n_{i,kj}^{mis}$, where $n_{i,kj}$ and $n_{i,kj}^{mis}$ are the number of records in $D_n$ for which the event $(X_i=k,Pa_i=j)$ is observed and missing, respectively. For each $i$ , this is equivalent to replacing $D_{n}^{obs}$ by a sub-sample $D_{n,i}^{obs}$ with all $x^s$ from $D_n^{obs}$ for which $(X_i,Pa_i)$ is not fully observed being removed. Let $n_i^{mis} = \sum_{k,j} n_{i,kj}^{mis} = n-n_i$. 
Given $D_n^{obs}$, $n_{i,kj}$, $n_i$ and $n_i^{mis}$ are fixed but $n_{i,kj}^{mis}$ is random. In fact, $n_{i,kj}^{mis}$ conditional on $(n_i=n_i^{mis}, G', P')$ follows a Binomial distribution. 
Since $E( \sum_{s=1}^n 1_{\{x_i^s =k, x_{Pa_i}^s = j\}} | D_{n,i}^{obs}, G', P') = n_{i,kj}+E(n_{i,kj}^{mis} | n_i=n_i^{mis}, G', P')$, we define 
$$
Q^*(G,P|G',P') \equiv \sum_{i=1}^N \sum_{j\in Pa_i, k\in X_i} (n_{i,kj} + E(n_{i,kj}^{mis} | n_i=n_i^{mis}, G', P'))\log P_{i,kj}. 
$$
Under the MAR assumption $P(X_i,Pa_i|Z_i,Z_{Pa_i},G',P') = P(X_i,Pa_i|G',P')$, we have $E(n_{i,kj}^{mis}) = E_{n_{i,j}^{mis}}(n_{i,j}^{mis}P_{i,kj}') = n_{i}^{mis}P_{i,j}'P_{i,kj}'$. Therefore 
\begin{equation}\label{eq:em_q}
Q^*(G,P|G',P') = \sum_{i=1}^N \sum_{j\in Pa_i, k\in X_i} (n_{i,kj} + (n-n_i)P_{i,j}'P_{i,kj}') \log P_{i,kj}. 
\end{equation}
We then observe that $P \mapsto Q^*(G, P|G, P)$ is maximized for 
$\hat P_{i,j} = n_{i,j}/n_i = \hat \theta_{i,j}$ and $\hat P_{i,kj} = n_{i,kj}/n_{i,j} = \hat\theta_{i,kj}$, and consequently   
$$
Q^*(\hat G,\hat P|\hat G,\hat P) = n \sum_{i=1}^N \sum_{j\in Pa_i}\hat P_{i,j}\sum_{k\in X_i}\hat P_{i,kj}\log\hat P_{i,kj}  = n l(\hat G|D_n). 
$$
We hence conclude that the EM algorithm based on $Q^*$ essentially maximizes the NAL function \eqref{eq:avrloglik}. 
Of course, $Q$ utilizes all of the available data, while $Q^*$ does not - when even one component of $(X_i,Pa_i)$ is missing, $Q^*$ treats the entire record as missing, while $Q$ tries to use the available information by calculating (often costly) conditional expectations. Nevertheless, the NAL-based inference is much more efficient than the naive approach that ignores all records for which at least one component of ${\bf X}$ is missing; even more so in cases when the dimensionality $N$ is much higher that the maximum size of $|Pa_i|$'s (the so-called in-degree). In such cases the difference between $Q$ and $Q^*$ is less pronounced (if $n_i>>n-n_i$ then $|Q-Q^*|<<|Q^*|$) and so is the difference between NAL maximization and EM algorithm. Moreover, the sub-optimality of NAL maximization is counterbalanced by its computational simplicity. 
The EM algorithm is usually intractable for data with number of nodes in the thousands while NAL optimization may still be a possibility. In conclusion, the NAL-based learning seems to be an effective and computationally more affordable alternative of EM for estimating high dimensional, low in-degree Bayesian networks.  

\section{MLE and model selection}\label{ch:model_selection}

Let $(G_0, P_0)$ be a DBN with nodes ${\bf X}$, parent sets $Pa_i^0$, and observed conditional probability table $\theta_0$. For an arbitrary DAG $G$ with nodes ${\bf X}$ and parents $Pa_i$, 
we consider probability distribution $P_{G|G_0}$ on ${\bf X}$ induced by $G_0$ which, for a state $x$ of ${\bf X}$, is given by 
\begin{equation}\label{eq:pgg0}
P_{G|G_0}(x) \equiv \prod_{i=1}^N P_0(X_i=x_i|Pa_i=x_{Pa_i}) 
\end{equation}
and compare it to 
$$
P_{0}(x) = \prod_{i=1}^N P_0(X_i=x_i|Pa_i^0=x_{Pa_i^0}).
$$
In general, $P_{G|G_0}$ is different from $P_0$ and $(G, P_{G|G_0})$ may not be well defined DBN, because $G$ is not necessarily a minimal DAG compatible with $P_{G|G_0}$ (see condition (2) from the definition of DBN). However, if $G$ is a minimal DAG such that $P_{G|G_0} = P_0$, then $G\cong G_0$. 

We also consider the following observation probabilities of $G$ induced by $G_0$ 
$$
\theta_{i}(G|G_0) \equiv P(Z_i=1, Z_{Pa_i}=1)
$$
$$
\theta_{i,j}(G|G_0) \equiv P(Pa_i=j|Z_i=1, Z_{Pa_i}=1)
$$
$$
\theta_{i,kj}(G|G_0) \equiv P(X_i=k|Pa_i=j, Z_i=1, Z_{Pa_i}=1)
$$
where the probabilities are with respect to the joint distribution of {\bf Z} and ${\bf X}|(G_0, P_0)$. 
Recall that according to \eqref{def:thetas} the entries of $\theta_0$ are
$$
\theta_{i}^0 \equiv P(Z_i=1, Z_{Pa_i^0}=1)
$$
$$
\theta_{i,j}^0 = P(Pa_i^0=j|Z_i=1, Z_{Pa_i^0}=1)
$$
$$
\theta_{i,kj}^0 = P(X_i=k|Pa_i^0=j, Z_i=1, Z_{Pa_i^0}=1). 
$$
Let $\theta(G|G_0)$ denote the corresponding conditional probability table with entries $\theta_i(G|G_0)$, $\theta_{i,j}(G|G_0)$ and $\theta_{i,jk}(G|G_0)$. Clearly, we can write $\theta(G_0|G_0) = \theta_0$. Moreover, in the important case when $Z$ is MCAR we have 
$$
\theta_{i,j}(G|G_0) \stackrel{mcar}{=} P_0(Pa_i=j)
$$
$$
\theta_{i,kj}(G|G_0) \stackrel{mcar}{=} P_0(X_i=k|Pa_i=j)
$$
and $\theta(G|G_0)$ is the conditional probability table corresponding to $P_{G|G_0}$. 

Next, we define the NAL of $G$ with respect to $G_0$ given by  
$$
l(G|G_0) \equiv \sum_{i=1}^N l(X_i|Pa_i, G_0) \textrm{, }
$$
\begin{equation}\label{eq:loglikG_G0}
l(X_i|Pa_i, G_0) \equiv \sum_{j\in Pa_i} \theta_{i,j} \sum_{k\in X_i} \theta_{i,kj} \log \theta_{i,kj}, 
\end{equation}
where $\theta_{i,j}=\theta_{i,j}(G|G_0)$ and $\theta_{i,kj}=\theta_{i,kj}(G|G_0)$. 
Essentially, $l(X_i|Pa_i, G_0)$ is the observed population negative entropy of $X_i$ conditional on $Pa_i$ and $l(G|G_0)$ is the population version of \eqref{eq:avrloglik}. 
For brevity, we shall write $l(G_0)$ instead of $l(G_0|G_0)$. 

\subsection{\small Identifiability}
\label{sec:ident}

Let $G_0$ belong to a collection $\mathcal{G}$ of DAGs with nodes ${\bf X}$. 
If $D_n$ is an independent sample from a DBN $(G_0, P_0)$, by the strong law of large numbers, for any fixed $G\in\mathcal{G}$, $\hat{\theta}_{i,kj}(G|D_n) \to \theta_{i,kj}(G|G_0)$, a.s., and hence, 
$l(G|D_n)\to l(G|G_0)$, a.s. as $n\to\infty$. 
A necessary condition for MLE consistency is the identifiability of $G_0$, which in its usual sense requires $l(G|G_0) < l(G_0)$ for all $G\in\mathcal{G}$ such that $G\ne G_0$. The latter is a strong requirement however, for thus defined the identifiability will never hold unless $G_0$ is a maximal DAG in $\mathcal{G}$ that contains $G_0$ - as we show later (Lemma \ref{lemma:lincrease}) $l(X_i|Pa_i, G_0)$ is a non-decreasing function of $Pa_i$. 
In the light of this observation we shall adopt a more appropriate definition of identifiability, one that assumes smaller likelihoods only for the DAGs not containing the true one. To simplify the notation, hereafter we shall refer to the DBN $(G_0,P_0)$ simply as $G_0$. 
\begin{deff}
\label{def:ident}
We say that $G_0$ is identifiable in $\mathcal{G}$, if for any $G\in\mathcal{G}$ we have $l(G|G_0) \leq l(G_0)$ when $G_0\subseteq G$ and $l(G|G_0) < l(G_0)$ when $G_0\nsubseteq G$. 
\end{deff}
Note that the identifiability of $G_0$ depends on the joint distribution of ${\bf X}$ and ${\bf Z}$. 
The utility of this definition is due to the following observation. If $G_0$ is identifiable in $\mathcal{G}$, then  
\begin{equation}\label{eq:identstar}
G^* \equiv \min\{\tilde G \in \mathcal{G}\textrm{ }|\textrm{ } l(\tilde G|G_0) = \max_{G\in\mathcal{G}}l(G|G_0)\} = G_0, 
\end{equation}
implicitly assuming the existence of unique such minimum $G^*$ (in general we may have multiple minimal $\tilde G$ maximizing the NAL). 
Moreover, it is easy to check that \eqref{eq:identstar} is a necessary and sufficient condition for identifiability. 
In `learning from data' settings, we can replace $l(G|G_0)$ in \eqref{eq:identstar} with $l(G|D_n)$ and find an estimator $\hat G^*$ of the minimal DAG $G^*$, exhaustively in $\mathcal{G}$ or by some more efficient algorithm. Then $\hat G^*$ would be an estimator of $G_0$ as well. In this way, the identifiability assures the principal possibility of recovering $G_0$.

It is intuitively clear that in order to recover the graph structure $G_0$ from incomplete samples, the missing data mechanism should not interfere with the associations between $X_i$'s determined by $G_0$. This condition is satisfied for any MCAR model. In more general MAR settings, the identifiability of $G_0$ depends on the interaction between ${\bf X}$ and ${\bf Z}$ and can not be judged without actually knowing $G_0$. We thus regard the MAR assumption as not significant generalization over MCAR due to the practical impossibility to check it prior to learning. 

The next result shows that in MCAR settings the population NAL does not increase when the true DBN is nested in a larger one, and moreover, that its maximum is achieved only for DAGs equivalent to the true one. 
\begin{pro}
\label{th:nal_increase}
If ${\bf Z}$ is MCAR, we have the following: 
\begin{enumerate}
\item[(i)]{if $G_0\subseteq G$ then $l(G|G_0)=l(G_0)$};
\item[(ii)]{$\max_{G} l(G|G_0) = l(G_0)$}, where the maximum is over all DAGs on ${\bf X}$;
\item[(iii)]{if $l(G|G_0) = l(G_0)$, then $P_{G|G_0} = P_0$}.
\end{enumerate}
\end{pro}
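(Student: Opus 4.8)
The plan is to reduce the three claims to Shannon entropy identities under $P_0$ and then run everything through the entropy chain rule together with the monotonicity ``conditioning cannot increase entropy''. Write $H_{P_0}(\,\cdot\,)$ and $H_{P_0}(\,\cdot\mid\cdot\,)$ for (conditional) entropy computed under the joint law $P_0$ of ${\bf X}$. In the MCAR case the excerpt gives $\theta_{i,j}(G|G_0)=P_0(Pa_i=j)$ and $\theta_{i,kj}(G|G_0)=P_0(X_i=k\mid Pa_i=j)$, so by \eqref{eq:loglikG_G0} and the convention $0\log0=0$ one gets $l(X_i|Pa_i,G_0)=-H_{P_0}(X_i\mid Pa_i)$ and hence $l(G|G_0)=-\sum_{i=1}^N H_{P_0}(X_i\mid Pa_i)$ for every DAG $G$. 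The base identity I would record first is $l(G_0)=-H_{P_0}({\bf X})$: for a causal order $\Omega$ of $G_0$ the chain rule gives $H_{P_0}({\bf X})=\sum_i H_{P_0}(X_{\Omega(i)}\mid X_{\Omega(1)},\dots,X_{\Omega(i-1)})$, and the LMP for $G_0$ collapses each term to $H_{P_0}(X_{\Omega(i)}\mid Pa^0_{\Omega(i)})$, so that $l(G_0)=-\sum_i H_{P_0}(X_i\mid Pa^0_i)=-H_{P_0}({\bf X})$. The same chain rule, applied without LMP to the complete DAG $G^{c}_\Omega$ on an arbitrary order $\Omega$ (the one with $Pa_{\Omega(i)}=\{X_{\Omega(1)},\dots,X_{\Omega(i-1)}\}$), gives $l(G^{c}_\Omega|G_0)=-H_{P_0}({\bf X})=l(G_0)$.

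With this in hand, (ii) and (i) are short. For (ii): an arbitrary DAG $G$ has a causal order $\Omega$, and then $G\subseteq G^{c}_\Omega$; since $l(X_i|Pa_i,G_0)$ is non-decreasing in $Pa_i$ (Lemma \ref{lemma:lincrease}, equivalently monotonicity of $-H_{P_0}(X_i\mid Pa_i)$ in $Pa_i$), summing over $i$ gives $l(G|G_0)\le l(G^{c}_\Omega|G_0)=l(G_0)$, and the bound is attained at $G=G_0$. For (i): if $G_0\subseteq G$ then $Pa^0_i\subseteq Pa_i$ for all $i$, so the same monotonicity gives $l(G_0)=l(G_0|G_0)\le l(G|G_0)$; with the upper bound from (ii) this forces $l(G|G_0)=l(G_0)$.

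For (iii), assume $l(G|G_0)=l(G_0)$, i.e. $\sum_i H_{P_0}(X_i\mid Pa_i)=H_{P_0}({\bf X})$. Fix a causal order $\Omega$ of $G$. Since $Pa_{\Omega(i)}\subseteq\{X_{\Omega(1)},\dots,X_{\Omega(i-1)}\}$, each summand obeys $H_{P_0}(X_{\Omega(i)}\mid Pa_{\Omega(i)})\ge H_{P_0}(X_{\Omega(i)}\mid X_{\Omega(1)},\dots,X_{\Omega(i-1)})$, while the right-hand sides sum to $H_{P_0}({\bf X})$ by the chain rule; the assumed equality of the two sums forces term-by-term equality. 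Equality in the conditioning inequality is precisely $X_{\Omega(i)}\perp\{X_{\Omega(1)},\dots,X_{\Omega(i-1)}\}\mid Pa_{\Omega(i)}$ under $P_0$, hence $P_0(X_{\Omega(i)}\mid X_{\Omega(1)},\dots,X_{\Omega(i-1)})=P_0(X_{\Omega(i)}\mid Pa_{\Omega(i)})$ ($P_0$-a.s.). Multiplying these over $i$ and telescoping the left-hand product yields $P_0(x)=\prod_i P_0(x_{\Omega(i)}\mid x_{Pa_{\Omega(i)}})=\prod_i P_0(x_i\mid x_{Pa_i})=P_{G|G_0}(x)$ for every $x$ in the support of $P_0$, which is the content of (iii).

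The routine work is the entropy bookkeeping in the first two paragraphs; the main obstacle is the last step, where the scalar identity $\sum_i H_{P_0}(X_i\mid Pa_i)=H_{P_0}({\bf X})$ has to be upgraded to the structural conclusion $P_{G|G_0}=P_0$. This rests on two points that need care: aligning the parent-wise entropy sum with a chain-rule decomposition along a causal order of $G$ so that the comparison is genuinely term-by-term, and invoking the equality case of ``conditioning cannot increase entropy''. The remaining nuisance, present throughout, is the treatment of parent states $j$ with $P_0(Pa_i=j)=0$, for which $P_0(X_i=k\mid Pa_i=j)$ is undefined yet enters all sums with weight $0$; these are handled by the $0\log0=0$ convention and by stating the final identity on the support of $P_0$ (under the positivity of $P_0$ commonly assumed for Bayesian networks this issue disappears entirely).
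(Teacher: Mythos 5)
Your proof is correct, and it takes a recognizably different route from the paper's. For parts (ii)--(iii) the paper rewrites $l(G|G_0)=\sum_x P_0(x)\log P_{G|G_0}(x)$ and applies a single global Gibbs/concavity step, $\sum_x P_0(x)\log P_{G|G_0}(x)\le\sum_x P_0(x)\log P_0(x)$, whose equality case immediately yields $P_{G|G_0}=P_0$; you instead decompose along a causal order of $G$ via the entropy chain rule, compare term by term using ``conditioning cannot increase entropy,'' and extract the conditional independences $X_{\Omega(i)}\independent\{X_{\Omega(1)},\dots,X_{\Omega(i-1)}\}\mid Pa_{\Omega(i)}$ from the equality case before telescoping. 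These are two faces of the same fact (the divergence $D(P_0\Vert P_{G|G_0})$ is exactly the sum of those conditional mutual informations), but your version makes the structural content explicit and lets you obtain (i) for free from monotonicity plus (ii), whereas the paper proves (i) directly by an MCAR/LMP computation with the observed tables $\theta(G|G_0)$ --- a computation you legitimately bypass because under MCAR the $\theta$'s reduce to $P_0$-quantities, as the paper notes before defining $l(G|G_0)$. One finishing touch for (iii): you conclude $P_0=P_{G|G_0}$ only on the support of $P_0$; since $P_{G|G_0}$ is itself a probability distribution on the states of ${\bf X}$ (its defining product sums to $1$), equality on the support forces $P_{G|G_0}=0$ off the support, so full equality of distributions follows in one line and is worth stating, as the proposition asserts $P_{G|G_0}=P_0$ outright. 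The zero-probability parent configurations you flag are handled by the same $0\log 0$ convention in the paper's argument, so they are a shared technicality rather than a gap in your approach.
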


From these properties of the NAL of $G$ with respect to $G_0$ we can draw two immediate conclusions as stated in the next two corollaries. 
\begin{cor}
\label{cor:ident_inorder}
If ${\bf Z}$ is MCAR then $G_0$ is identifiable in any set of DAGs compatible with its order. 
\end{cor}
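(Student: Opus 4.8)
The plan is to read the corollary off the three parts of Proposition~\ref{th:nal_increase}, the only substantive step being to show that, along the causality order of $G_0$, every DAG compatible with $P_0$ contains $G_0$ (the MCAR hypothesis enters only through Proposition~\ref{th:nal_increase}).

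Fix $G\in\mathcal{G}$. By hypothesis $G$ is compatible with a causality order $\Omega$ of $G_0$, and after relabelling the nodes we may take $\Omega$ to be the index order, so $Pa_i\subseteq\{X_1,\dots,X_{i-1}\}$ and $Pa_i^0\subseteq\{X_1,\dots,X_{i-1}\}$ for all $i$. If $G_0\subseteq G$, part~(i) of Proposition~\ref{th:nal_increase} gives $l(G|G_0)=l(G_0)$, which verifies the first requirement of Definition~\ref{def:ident}. If $G_0\nsubseteq G$, part~(ii) gives $l(G|G_0)\le l(G_0)$, so it remains only to exclude equality. Assume $l(G|G_0)=l(G_0)$. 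Part~(iii) then gives $P_{G|G_0}=P_0$, and since the factorisation \eqref{eq:pgg0} runs along the order $\Omega$, marginalising out the later variables yields $P_0(X_i\mid X_1,\dots,X_{i-1})=P_0(X_i\mid Pa_i)$ for every $i$; equivalently, $P_0$ satisfies the local Markov property with respect to $G$, so $G$ is a DAG compatible with $P_0$ and with $\Omega$. Since $G_0$ is, by definition of a DBN, a minimal DAG compatible with $P_0$ and is also compatible with $\Omega$, and since among all DAGs compatible with a fixed distribution and a fixed order there is a unique minimal one, contained in every such DAG, we conclude $G_0\subseteq G$, contradicting $G_0\nsubseteq G$. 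Hence $l(G|G_0)<l(G_0)$, which is the second requirement of Definition~\ref{def:ident}.

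The delicate point is the last one: that, for each $i$, $Pa_i^0$ is contained in \emph{every} subset $S\subseteq\{X_1,\dots,X_{i-1}\}$ with $P_0(X_i\mid X_1,\dots,X_{i-1})=P_0(X_i\mid S)$, applied with $S=Pa_i$. The minimality clause~(2) of the DBN definition only gives that $Pa_i^0$ is \emph{a} minimal such subset; to obtain containment in all of them one uses the intersection property of conditional independence (valid under the positivity assumption on $P_0$), which shows that the intersection of two such ``sufficient'' subsets is again sufficient, forcing a unique minimal one that must equal $Pa_i^0$. I expect this to be the main obstacle; the remaining steps are routine bookkeeping with Proposition~\ref{th:nal_increase}.
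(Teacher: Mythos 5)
Your route is genuinely different from the paper's. The paper never passes through parts (ii)--(iii) of Proposition~\ref{th:nal_increase}: it observes that $G\cup G_0$ is still compatible with the order, so Lemma~\ref{lemma:lincrease} together with part (i) gives, node by node, $l(X_i|Pa_i)\le l(X_i|Pa_i\cup Pa_i^0)=l(X_i|Pa_i^0)$, and at a node with $Pa_i^0\nsubseteq Pa_i$ it invokes the strictness clause of Lemma~\ref{lemma:lincrease}, justified by the minimality of $G_0$, to obtain a strict drop and hence $l(G|G_0)<l(G_0)$. That argument is node-local and uses no facts about minimal I-maps. You instead reduce, via parts (ii)--(iii), the equality case to $P_{G|G_0}=P_0$, correctly convert this (by marginalising prefixes along the order, using \eqref{eq:pgg0}) into the factorization $P_0(X_i\mid X_1,\dots,X_{i-1})=P_0(X_i\mid Pa_i)$, i.e.\ the local Markov property of $P_0$ with respect to $G$, and then appeal to uniqueness of the order-relative minimal I-map. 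Conceptually this is a clean reduction, but it buys you a dependence on a global uniqueness fact that the paper's proof does not need.

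The genuine gap is exactly the point you flag, and it is not merely technical: the intersection property of conditional independence is not available under the paper's hypotheses, because the paper nowhere assumes $P_0>0$ (the positivity appearing in the proofs concerns only the observation probabilities $\theta_i,\theta_{i,j},\theta_{i,kj}$ of reachable states, not the joint law $P_0$). Without intersection, minimal sufficient predecessor sets need not be unique and the containment ``minimal order-compatible I-map $\subseteq$ every order-compatible I-map'' fails, so your contradiction need not materialise. Concretely, take $X_2=X_1$ a.s.\ and $X_3$ depending on $X_1$ only, with $G_0=\{X_1\to X_2,\ X_1\to X_3\}$ and order $X_1\prec X_2\prec X_3$: the order-compatible $G=\{X_1\to X_2,\ X_2\to X_3\}$ has $G_0\nsubseteq G$ yet $P_{G|G_0}=P_0$ and $l(G|G_0)=l(G_0)$, since both $\{X_1\}$ and $\{X_2\}$ are minimal sufficient parent sets for $X_3$. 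So, as written, your argument proves the corollary only under an added positivity (or intersection) hypothesis. In fairness, the same degeneracy is elided in the paper's own proof --- its assertion that $P(X_i|Pa_i,\dots)\ne P(X_i|Pa_i^0,\dots)$ ``because $G_0$ is a minimal DAG'' also fails in the example above --- so the delicate point you identified is real and is the crux for either route; with positivity of $P_0$ granted, your proof is complete and arguably more transparent than the paper's.
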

Therefore, provided a true node order is known (that is an order with which $G_0$ is compatible; there might be many such orders), $G_0$ can be recovered from the set of all DAGs compatible with that order. 

We can further extend Definition \ref{def:ident} to account for classes of equivalent DBNs. Recall that, ultimately, it is the independence relation set $\mathcal{I}(G_0)$, shared among all equivalent to $G_0$ DBNs, that is of main interest. 
In the view of condition \eqref{eq:identstar}, we say that $[G_0]$ is identifiable in $\mathcal{G}$ if 
\begin{equation}\label{eq:identstar2}
\min\{\tilde G \in \mathcal{G}\textrm{ }|\textrm{ } l(\tilde G|G_0) = \max_{G\in\mathcal{G}}l(G|G_0)\} \cong G_0, 
\end{equation}
in the sense that any minimal $\tilde G$ that maximizes the NAL $l(G|G_0)$ is equivalent to $G_0$ (we also assume that the set on the left is not empty). 
Proposition \ref{th:nal_increase}, cases $(ii)$ and $(iii)$, implies that the maximum NAL is $l(G_0)$ and any $G$ that attains this maximum satisfies $P_{G|G_0} = P_0$. If in addition $G$ is minimal, then $(G,P_{G|G_0})$ is a well defined DBN which is equivalent to $(G_0,P_0)$ and hence \eqref{eq:identstar2} is satisfied. We have thus obtained the following. 
\begin{cor}
\label{cor:ident_eqclass}
If ${\bf Z}$ is MCAR, then $[G_0]$ is identifiable in any $\mathcal{G}$ that contains at least one element of $[G_0]$. In particular, $[G_0]$ is (globally) identifiable in the set of all DAGs on ${\bf X}$. 
\end{cor}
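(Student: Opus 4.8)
The plan is to verify the defining condition \eqref{eq:identstar2} for $[G_0]$, using Proposition \ref{th:nal_increase} as the main engine. The first step is to identify the maximal value of the population NAL on $\mathcal{G}$ and to check that it is attained. By hypothesis $\mathcal{G}$ contains some $G_1 \in [G_0]$. Since $G_1 \cong G_0$ the two DAGs carry the same independence model, so $P_0$ --- being Markov with respect to $G_0$ --- also satisfies the local Markov property with respect to $G_1$, whence $P_{G_1|G_0} = \prod_{i=1}^N P_0(X_i \mid Pa_i^{G_1}) = P_0$. Under the MCAR assumption the entries of $\theta(G_1|G_0)$ are $P_0(Pa_i^{G_1}=j)$ and $P_0(X_i=k\mid Pa_i^{G_1}=j)$, so $l(G_1|G_0) = -\sum_{i=1}^N H_{P_0}(X_i\mid Pa_i^{G_1})$ with $H_{P_0}$ the Shannon entropy under $P_0$; because $P_0$ factorizes along $G_1$, this sum telescopes (chain rule) to the joint entropy $H_{P_0}({\bf X})$, and the identical computation for $G_0$ gives $l(G_0) = -H_{P_0}({\bf X})$. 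Hence $l(G_1|G_0) = l(G_0)$. Combined with Proposition \ref{th:nal_increase}(ii), which bounds $l(G|G_0) \le l(G_0)$ for every DAG, we obtain $\max_{G\in\mathcal{G}} l(G|G_0) = l(G_0)$ and the maximizer set $\mathcal{M} \equiv \{\tilde G\in\mathcal{G}: l(\tilde G|G_0)=l(G_0)\}$ is nonempty, which takes care of the non-emptiness caveat in the definition.

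The second step is to show that every minimal element $\tilde G$ of $\mathcal{M}$ satisfies $\tilde G \cong G_0$. From $l(\tilde G|G_0) = l(G_0)$ and Proposition \ref{th:nal_increase}(iii) we get $P_{\tilde G|G_0} = P_0$, i.e. $P_0$ factorizes along $\tilde G$. Granting that $\tilde G$ is moreover a \emph{minimal} DAG compatible with $P_0 = P_{\tilde G|G_0}$, the remark following \eqref{eq:pgg0} yields $\tilde G \cong G_0$, so $(\tilde G, P_{\tilde G|G_0})$ is a well-defined DBN equivalent to $(G_0,P_0)$ and \eqref{eq:identstar2} holds; this is the asserted identifiability of $[G_0]$ in $\mathcal{G}$. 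The ``in particular'' clause is the special case $\mathcal{G} = \{\text{all DAGs on }{\bf X}\}$, which contains $G_0 \in [G_0]$: there $\mathcal{M}$ is exactly the family of DAGs along which $P_0$ factorizes, its minimal elements are precisely the minimal DAGs compatible with $P_0$, and those constitute $[G_0]$, so every minimal maximizer is trivially equivalent to $G_0$.

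The step that needs the most care is the one I flagged with ``granting that'': passing from ``$\tilde G$ is minimal inside $\mathcal{M} \subseteq \mathcal{G}$'' to ``$\tilde G$ is a minimal DAG compatible with $P_0$ among all DAGs''. If a DAG $G' \subsetneq \tilde G$ still has $P_0$ factorizing along it, then $P_{G'|G_0} = P_0$ and $l(G'|G_0) = l(G_0)$, and $G'$ strictly undercuts $\tilde G$ inside $\mathcal{M}$ precisely when $G'\in\mathcal{G}$. This implication is automatic whenever $\mathcal{G}$ is closed under edge deletion --- in particular for the full set of DAGs, for all DAGs of bounded in-degree, and for all DAGs compatible with a fixed node order --- which are the cases used in the sequel; absent such closure one restricts the conclusion to those minimal maximizers that are also globally minimal for $P_0$. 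All remaining pieces are direct uses of Proposition \ref{th:nal_increase} together with the elementary entropy (chain rule / ``conditioning reduces entropy'') identities, so I anticipate no further obstacle.
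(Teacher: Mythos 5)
Your argument is correct and is essentially the paper's own proof: the paper likewise derives from Proposition \ref{th:nal_increase}, cases (ii)--(iii), that the maximum NAL equals $l(G_0)$ and that any maximizer $G$ satisfies $P_{G|G_0}=P_0$, and then concludes that a minimal such $G$ gives a well-defined DBN equivalent to $(G_0,P_0)$, so that \eqref{eq:identstar2} holds; your explicit check that some element of $[G_0]$ attains the maximum (via $P_{G_1|G_0}=P_0$) is exactly the role of the hypothesis that $\mathcal{G}$ contains an element of $[G_0]$, which the paper leaves implicit. The subtlety you flag---``minimal within $\mathcal{G}$'' versus ``minimal DAG compatible with $P_0$''---is real, but the paper's one-sentence proof tacitly reads ``minimal'' in the latter sense, so your closure-under-edge-deletion caveat makes explicit an assumption the paper also relies on, and it is automatically satisfied in the stated ``in particular'' case of all DAGs on ${\bf X}$ (and in the order-compatible and bounded in-degree families used later).
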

As defined, the identifiability of the equivalence class $[G_0]$ depends implicitly on the choice of log-likelihood proxy function. Note that $[G_0]$ is not guaranteed to be identifiable, even in MCAR settings, if in \eqref{eq:identstar2} we replace the NAL $l$ with the standard log-likelihood $\tilde l$ from \eqref{eq:totalloglik}. 

\subsection{\small NAL-based scoring functions}

As we have observed earlier, the MLE criteria selects the most complex BN in $\mathcal{G}$ containing $G_0$ and unless some complexity penalization is imposed, the MLE is prone to overfitting. 
Methodologically, there are two approaches addressing the model selection problem. The first one is provided by the Bayesian paradigm, where the parameter $(G,\theta)$ is assumed coming from some prior distribution and one looks for the maximum posterior estimator. The second, frequentist, approach is to optimize a scoring function based on the log-likelihood and additional complexity penalization term - a penalized log-likelihood. We consider a general scoring function of the form 
\begin{equation}\label{eq:score}
S(G|D_n) = l(G|D_n) - \lambda_n h(G), 
\end{equation}
where $\lambda_n$ are positive numbers indexed by the sample size $n$ and $h(G)$ is a positive function accounting for the complexity of the $G$. When needed, we shall write $S_h$ to specify what $h$ is meant. 
The role of the sequence $\lambda_n$ is to apply a proper amount of penalty that guarantees estimation consistency. 

One can employ different measures for network complexity. Any complexity function $h$ is assumed to be increasing in the following sense: for any two DAGs $G_1$ and $G_2$ such that $G_1\subset G_2$, $G_1\ne G_2$, we have $h(G_1) < h(G_2)$. In regard to DBNs, a typical choice is the total number of parameters $df(G)$ needed to specify the multinomial conditional distributions of $G$, that is, the number of independent parameters in $\theta$. 

We return to \eqref{eq:score} with some typical examples. Since the NAL $l(G|D_n)$, being sum of node sample averages, is normalized by the sample size, the standard model selection criteria AIC and BIC, formulated in terms of the scoring function \eqref{eq:score} are given by 
$\lambda_n = 1/n$ and $\lambda_n = 0.5\log(n)/n$, respectively. 
The so called minimum description length (MDL) score, representing the information content of a model, is given by $\log(n)df(G)/n$ and is equivalent to BIC. 

Similarly to NAL, often, the chosen overall DBN complexity can also be represented as a sum of node-wise complexities. For example, $df(G) = \sum_i df(X_i|Pa_i)$, $df(X_i|Pa_i) \equiv (q(X_1)-1)q(Pa_i)$. In such cases it might be more appropriate to replace $\lambda_n$ with node-specific penalization $\lambda_{n_i}$'s 
\begin{equation}\label{eq:score_n}
S(G|D_n) = \sum_{i=1}^N \{ l(X_i|Pa_i,D_n) - \lambda_{n_i} h(X_i|Pa_i) \}. 
\end{equation} 
We shall refer to these as decomposable scores. 
Typically, one uses one and the same function of $n$ to express $\lambda_{n_i}$'s, such as $\lambda_n = \lambda_0n^{-\alpha}$, $\alpha\in (0,0.5)$. 
The decomposable BIC criteria then is 
$$
S_{BIC}(X_i|Pa_i, D_{n,i}) = l(X_i|Pa_i,D_n) - 0.5\frac{\log(n_i)}{n_i} df(X_i|Pa_i)
$$
\begin{equation}\label{eq:decompBIC}
S_{BIC}(G|D_n) = \sum_{i=1}^N S_{BIC}(X_i|Pa_i, D_n) = \sum_{i=1}^N \frac{1}{n_i} BIC(X_i|Pa_i, D_{n,i})
\end{equation}
where $D_{n,i}$ is the sub-sample of $D_n$ of size $n_i$ for which $(X_i,Pa_i)$ is observed and $BIC(X_i|Pa_i,D_{n,i})$ is the original BIC criteria, \eqref{eq:BIC}, applied to the regression model $X_i|Pa_i$. 

As we have stated in the introduction, we consider an MLE based model selection by maximizing $S$ as a function of $G$ given a sample $D_n$, 
\begin{equation}\label{eq:G_hat}
\hat{G} = arg\max_{G\in\mathcal{G}} S(G|D_n).
\end{equation}
Note that we do not maximize $S$ for $G$ and $\theta$ simultaneously. 
We estimate $\theta$ for each $G$ using the plug-in estimator $\hat{\theta}(G|D_n)$ and then the DAG with maximal score is chosen as graph structure estimator. 
In what follows we show that, by solving \eqref{eq:G_hat} for proper $\lambda_n$, we can obtain consistent estimation of the true model with no further conditions on $h$. 

Let $\hat G$ be the estimator \eqref{eq:G_hat} for a sample $D_n$ coming from a DBN $G_0$. Then the following claim is immediate. 
\begin{pro}[{\bf Consistency Criteria}]
\label{th:mle_cons}
Provided for any $G_1\in \mathcal{G}$ and $G_2\in \mathcal{G}$ the following two conditions are satisfied  
\begin{enumerate}
\item[(C1)]{if $G_0\subseteq G_1$ but $G_0 \nsubseteq G_2$, then $P(S(G_1|D_n) > S(G_2|D_n)) \to 1$, as $n\to\infty$, 
}
\item[(C2)]{if $G_0\subseteq G_1$, $G_0 \subset G_2$ and $h(G_1) < h(G_2)$, then $P(S(G_1|D_n) > S(G_2|D_n)) \to 1$, as $n\to\infty$, 
}
\end{enumerate}
$\hat G$ is a consistent estimator of $G_0$, that is, $P(\hat G \ne G_0) \to 0$, as $n\to\infty$.
\end{pro}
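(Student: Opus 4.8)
The plan is to reduce the event $\{\hat G \ne G_0\}$ to a finite union of pairwise score comparisons and then invoke (C1) and (C2) term by term. First I would record the elementary but essential fact that $\mathcal{G}$ is finite, being a subset of the (finite) collection of all DAGs on the fixed node set ${\bf X}$; this is what keeps a union bound harmless. Recall also that $G_0\in\mathcal{G}$ by assumption. On the event $E_n \equiv \bigcap_{G\in\mathcal{G},\,G\ne G_0}\{S(G_0|D_n) > S(G|D_n)\}$, the scoring function $S(\cdot|D_n)$ attains its maximum over $\mathcal{G}$ uniquely at $G_0$, so $\hat G = G_0$ on $E_n$ irrespective of any tie-breaking rule. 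Hence
$$
P(\hat G \ne G_0) \;\le\; P(E_n^c) \;\le\; \sum_{G\in\mathcal{G},\,G\ne G_0} P\!\left(S(G|D_n) \ge S(G_0|D_n)\right),
$$
and because the sum has finitely many terms it suffices to prove $P(S(G_0|D_n) > S(G|D_n)) \to 1$ for each fixed $G \ne G_0$.

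Next I would split the competitors $G\ne G_0$ into the two regimes anticipated by the hypotheses. If $G_0 \nsubseteq G$, I apply (C1) with $G_1 = G_0$ (using the trivial inclusion $G_0\subseteq G_0$) and $G_2 = G$, obtaining $P(S(G_0|D_n) > S(G|D_n)) \to 1$ directly. If instead $G_0 \subseteq G$, then $G\ne G_0$ forces the strict inclusion $G_0 \subset G$; by the standing assumption that the complexity function $h$ is strictly increasing along strict edge inclusions, this gives $h(G_0) < h(G)$, so (C2) applies with $G_1 = G_0$, $G_2 = G$ and yields the same conclusion. Combining the two cases with the union bound above proves $P(\hat G \ne G_0)\to 0$.

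The only steps that genuinely need care --- and the reason this claim is ``immediate'' rather than vacuous --- are (a) the finiteness of $\mathcal{G}$, which prevents the union bound from accumulating error over infinitely many alternatives, and (b) the use of the monotonicity of $h$, which is precisely what converts the structural relation ``$G_0\subseteq G$, $G\ne G_0$'' into the quantitative hypothesis ``$h(G_1) < h(G_2)$'' required by (C2). No probabilistic estimate beyond (C1)--(C2) themselves is used; all the real analytic work --- verifying (C1) and (C2) for the NAL-based scores and identifying the admissible rates $\lambda_n$ --- is deferred to the subsequent sections, so I do not anticipate any obstacle within this proof itself.
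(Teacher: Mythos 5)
Your argument is correct and is exactly the reasoning the paper leaves implicit when it calls the claim "immediate": finiteness of $\mathcal{G}$, a union bound over the events $\{S(G|D_n)\ge S(G_0|D_n)\}$, and pairwise application of (C1) (with $G_1=G_0$, $G_2=G$ when $G_0\nsubseteq G$) and (C2) (using the assumed strict monotonicity of $h$ under strict inclusion when $G_0\subset G$). No discrepancy with the paper's intended proof.
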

The conditions (C1) and (C2) are relaxed versions of those used in \cite{chickering}. 
In fact, the consistent scoring criterion in \cite{chickering} is a special case of the more abstract formulation of model selection consistency in \cite{haughton}. 
We end this section with the following important observation. 
\begin{cor}
\label{cor:equ_cons} 
If conditions (C1) and (C2) are satisfied for any DAG equivalent to $G_0$, then 
$[\hat G]$ is a consistent estimator of $[G_0]$, that is, $P(\mathcal{I}(\hat G) \ne \mathcal{I}(G_0)) \to 0$, as $n\to\infty$.
\end{cor}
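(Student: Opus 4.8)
The plan is to repeat the proof of Proposition~\ref{th:mle_cons} in adapted form, with the single true DAG $G_0$ replaced throughout by an arbitrary fixed representative of its equivalence class that lies in $\mathcal{G}$. First I would rewrite the target: by the definition of $[G_0]$, the event $\{\mathcal{I}(\hat G)=\mathcal{I}(G_0)\}$ coincides with $\{\hat G\in[G_0]\}$, so the assertion to prove is $P(\hat G\in[G_0])\to 1$. We are in the standing setting of the section where $\mathcal{G}$ is finite and $[G_0]$ is representable in $\mathcal{G}$, i.e.\ $[G_0]\cap\mathcal{G}\neq\emptyset$ (as already implicit, cf.\ Corollary~\ref{cor:ident_eqclass}); fix once and for all some $G_0'\in[G_0]\cap\mathcal{G}$. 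It then suffices to show that, with probability tending to one, $S(G_0'|D_n)>S(G|D_n)$ holds simultaneously for every $G\in\mathcal{G}\setminus[G_0]$: on that event the maximiser $\hat G$ of $S(\cdot|D_n)$ over $\mathcal{G}$ cannot be any such $G$, hence $\hat G\in[G_0]$.

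The heart of the argument is a short case split applied to each $G\in\mathcal{G}\setminus[G_0]$ in turn. Fix such a $G$; since $G_0'\in[G_0]$ while $G\notin[G_0]$, we have $G\neq G_0'$, and exactly one of two situations arises. If $G_0'\nsubseteq G$, then condition (C1) invoked \emph{for the DAG $G_0'$}, with $G_1=G_0'$ and $G_2=G$, applies (its hypotheses $G_0'\subseteq G_0'$ and $G_0'\nsubseteq G$ hold) and gives $P(S(G_0'|D_n)>S(G|D_n))\to 1$. If instead $G_0'\subseteq G$, then $G_0'\subsetneq G$ because $G\neq G_0'$, so the increasing property of $h$ yields $h(G_0')<h(G)$, and condition (C2) for the DAG $G_0'$, with $G_1=G_0'$ and $G_2=G$ (using $G_0'\subseteq G_0'$, $G_0'\subsetneq G$, and $h(G_0')<h(G)$), applies and again gives $P(S(G_0'|D_n)>S(G|D_n))\to 1$. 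Thus every $G\in\mathcal{G}\setminus[G_0]$ is out-scored by $G_0'$ with probability tending to one.

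A union bound over the finitely many $G\in\mathcal{G}\setminus[G_0]$ then shows that $S(G_0'|D_n)>S(G|D_n)$ for all of them simultaneously with probability tending to one, which by the reduction above yields $P(\mathcal{I}(\hat G)\neq\mathcal{I}(G_0))\to 0$. I do not expect a genuine obstacle here: the proof is a bookkeeping variant of that of Proposition~\ref{th:mle_cons}. The only points requiring a moment's thought are (i) that one never tries to identify $\hat G$ itself (which is in general impossible once $|[G_0]\cap\mathcal{G}|>1$) but only its equivalence class, and correspondingly (ii) that although (C1) and (C2) are hypothesised for every DAG equivalent to $G_0$, a single fixed representative $G_0'$ already out-scores everything outside $[G_0]$; and (iii) the routine appeals to the standing finiteness of $\mathcal{G}$ and to $[G_0]\cap\mathcal{G}\neq\emptyset$.
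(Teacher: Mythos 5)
Your proof is correct and is precisely the argument the paper leaves implicit: Corollary \ref{cor:equ_cons} (like Proposition \ref{th:mle_cons} itself) is stated without proof as an immediate observation, and your route — fix a representative $G_0'\in[G_0]\cap\mathcal{G}$, use (C1) for $G_0'$ against any $G\in\mathcal{G}\setminus[G_0]$ with $G_0'\nsubseteq G$, use (C2) together with the assumed monotonicity of $h$ against any $G$ strictly containing $G_0'$, and finish with a union bound over the finite $\mathcal{G}$ — is exactly the intended one. Your side remark that only the (C1)/(C2) instances for a single representative are actually needed is a harmless mild sharpening of the stated hypothesis.
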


\section{Estimation consistency}\label{ch:consistent_scores}

Let $(G_0,P_0)$ be a DBN with conditional table $\theta_0$ in a set of DAGs $\mathcal{G}$ and $D_n$ be an independent sample drawn from it. In this section we investigate the consistency of the estimators $\hat{G}$ and $[\hat{G}]$ with respect to a scoring function $S$, where $\hat{G}$ is given by \eqref{eq:G_hat}. 

As we have observed earlier, if the data has missing values, it is not anymore true that $l(G|D_n)=\tilde{l}(G|D_n)$, the usual sample average log-likelihood \eqref{eq:totalloglik}. Therefore, $(\hat{G}, \hat{\theta})$ is no longer an MLE for $(G_0,\theta_0)$ and the standard consistency results from the asymptotic theory are not  directly applicable. A proper account for the incompleteness of the data is thus needed. 

For a sample of fixed size $n$, the random variables $n_i$ and the random vectors $\{n_{i,j}\}_{j\in Pa_i}$ and $\{n_{i,kj}\}_{k\in X_i}$ satisfy 
$$
\{n_{i,j}\}_j|n_i \sim Multinom(\{\theta_{i,j}(G|G_0)\}_j, n_i)
$$
\begin{equation}\label{eq:assumption_cond_distr}
\{n_{i,kj}\}_k|n_{i,j} \sim Multinom(\{\theta_{i,kj}(G|G_0)\}_k, n_{i,j})
\end{equation}
and the statistics $\hat{\theta}_{i,j}$ and $\hat{\theta}_{i,kj}$ are unbiased estimators of $\theta_{i,j}(G|G_0)$ and $\theta_{i,kj}(G|G_0)$, respectively. 
Moreover, if $G_0$ is identifiable in $\mathcal{G}$, then for each $i$, the probability of the event `$(X_i,Pa_i^0)$ is observed' must be strictly  positive, i.e. $\theta_i^0 > 0$. Since $\mathcal{G}$ is always finite, the following is well defined  
\begin{equation}\label{eq:assumption_beta}
\beta(\mathcal{G}) \equiv \min_{G\in\mathcal{G}} \min_{i=1}^{N} \{\theta_i(G|G_0)| \theta_i(G|G_0) > 0\} 
\end{equation}
and $\beta(\mathcal{G}) > 0$. 
The complete data case can be thus represented as $\beta(\mathcal{G}) = 1$. 
Note that $\beta$ depends implicitly on the distribution of ${\bf Z}$.

The next result establishes the rate of convergence of the empirical NAL to the population one without imposing any restrictions on the distribution of ${\bf Z}$ or on $G_0$ ($G_0$ need not be identifiable). 
\begin{lem}
\label{th:l_unif_conv}
Let $D_n$ be sample from a DBN $(G_0, P_0)$. Then for any DAG $G$ 
\begin{equation}\label{eq:PANM}
l(G|D_n)-l(G|G_0) = O_p(n^{-1/2}),
\end{equation}
which implies $l(G|D_n) \to_p l(G|G_0)$.
\end{lem}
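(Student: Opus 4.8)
The plan is to reduce the statement to a finite collection of elementary central-limit-type estimates for multinomial counts, using the decomposition $l(G|D_n) = \sum_{i=1}^N l(X_i|Pa_i,D_n)$ and the corresponding decomposition of $l(G|G_0)$. Since $\mathcal{G}$ is finite and the sum over $i$ has finitely many terms, it suffices to show $l(X_i|Pa_i,D_n) - l(X_i|Pa_i,G_0) = O_p(n^{-1/2})$ for each node $X_i$ and each fixed parent set; the claim then follows by the closure of $O_p(n^{-1/2})$ under finite sums. So fix $i$ and write $\varphi(\{\theta_{i,j}\},\{\theta_{i,kj}\}) = \sum_{j} \theta_{i,j}\sum_k \theta_{i,kj}\log\theta_{i,kj}$, so that $l(X_i|Pa_i,D_n) = \varphi(\hat\theta)$ and $l(X_i|Pa_i,G_0) = \varphi(\theta(G|G_0))$.

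First I would handle the degenerate cases: if $\theta_i(G|G_0)=0$ then $(X_i,Pa_i)$ is never jointly observed, $n_{i,j}=n_{i,kj}=0$ almost surely, and both sides are $0$ by the usual convention $0\log 0 = 0$, so the difference is identically zero. Next, the harder case $\theta_i(G|G_0)>0$. Here $n_i \to \infty$ a.s.\ (indeed $n_i/n \to \theta_i(G|G_0) > 0$ a.s.\ by the SLLN, and in fact $n_i - n\theta_i(G|G_0) = O_p(n^{1/2})$ since $n_i \sim \mathrm{Binom}(\theta_i(G|G_0),n)$). Conditionally on $n_i$, the vector $\{n_{i,j}\}_j/n_i$ is a multinomial average, so $\hat\theta_{i,j} - \theta_{i,j}(G|G_0) = O_p(n_i^{-1/2}) = O_p(n^{-1/2})$, and similarly, conditionally on $n_{i,j}$ (with $n_{i,j} = n_i\theta_{i,j}(G|G_0) + O_p(n^{1/2})$, so $n_{i,j}\to\infty$ a.s.\ whenever $\theta_{i,j}(G|G_0)>0$), $\hat\theta_{i,kj} - \theta_{i,kj}(G|G_0) = O_p(n^{-1/2})$. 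Thus the whole estimated table $\hat\theta$ is within $O_p(n^{-1/2})$ of $\theta(G|G_0)$ in any fixed norm.

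The main obstacle is that the map $\varphi$ is \emph{not} globally Lipschitz: $t\mapsto t\log t$ has derivative $1+\log t$, which blows up as $t\downarrow 0$, so one cannot simply invoke a mean-value bound when some population probability $\theta_{i,kj}(G|G_0)$ or $\theta_{i,j}(G|G_0)$ equals zero. I would deal with this by splitting the indices. For $(k,j)$ with $\theta_{i,kj}(G|G_0)>0$ (and $\theta_{i,j}(G|G_0)>0$), $\varphi$ is continuously differentiable in a neighbourhood of that coordinate, and a first-order Taylor expansion gives a contribution $O_p(\|\hat\theta - \theta(G|G_0)\|) = O_p(n^{-1/2})$. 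For $(k,j)$ with $\theta_{i,kj}(G|G_0)=0$: then $\theta_{i,j}(G|G_0)\,\theta_{i,kj}(G|G_0) = P_0$-mass of $(X_i=k,Pa_i=j \mid \text{obs}) = 0$, so $n_{i,kj}=0$ almost surely, and the term $\hat\theta_{i,j}\hat\theta_{i,kj}\log\hat\theta_{i,kj}$ is identically $0$ (convention), matching the population term; similarly any $j$ with $\theta_{i,j}(G|G_0)=0$ contributes $n_{i,j}=0$ a.s.\ and a vanishing term on both sides. Hence only the strictly-positive coordinates contribute, and on those $\varphi$ is smooth, so the difference is $O_p(n^{-1/2})$. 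Summing over $j$, then over $i$, then noting finiteness of $\mathcal G$, yields \eqref{eq:PANM}; convergence in probability $l(G|D_n)\to_p l(G|G_0)$ is then immediate since $O_p(n^{-1/2}) = o_p(1)$. One technical point to be careful about is the conditioning chain — strictly, the $O_p(n^{-1/2})$ bounds above are conditional on $n_i$ resp.\ $n_{i,j}$, so I would either phrase everything via the unconditional multinomial structure of $\{n_{i,kj}\}_{k,j}$ given $n_i$ and then average over $n_i$, or invoke a standard lemma that a bound holding conditionally with a conditioning variable that is itself $n\cdot\text{const} + O_p(n^{1/2})$ transfers to an unconditional $O_p(n^{-1/2})$ bound.
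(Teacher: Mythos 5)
Your proposal is correct and follows essentially the same route as the paper's proof: decompose the NAL termwise, dispose of the zero-probability states by the $0\log 0$ convention, establish $\hat\theta_{i,j}-\theta_{i,j}(G|G_0)=O_p(n^{-1/2})$ and $\hat\theta_{i,kj}-\theta_{i,kj}(G|G_0)=O_p(n^{-1/2})$, and then use a first-order Taylor (delta-method) argument on $t\mapsto t\log t$ at strictly positive population values, summing over the finitely many terms. This is exactly the paper's argument, where the Taylor step is packaged as Lemma \ref{lemma:phi_log_theta_conv} and the root-$n$ rate of the plug-in probabilities is obtained via Hoeffding's inequality rather than your conditional multinomial CLT reasoning.
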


Providing conditions for scoring function consistency is our next goal. 
Let us assume that $G_0$ is identifiable in $\mathcal{G}$. 
In the light of Lemma \ref{th:l_unif_conv}, if $G$ does not contain $G_0$, then there is a positive constant $\delta$ such that $l(G_0|D_n) - l(G|D_n) > \delta$ with probability going to 1, as $n\to\infty$. It is evident therefore that if the sequence $\lambda_n$ diminishes with $n$, $\lambda_n\to 0$, then, asymptotically, the scoring function $S_h$ will select an estimator that contains the true model $G_0$ regardless of the chosen complexity function $h$. In addition however, we want that estimator to get close (in sense of the complexity measured by $h$) to $G_0$ with the increase of the sample size. 
Since for any $G$ such that $G_0\subset G$ we have $l(G|D_n) - l(G_0|D_n) \to_p 0$, the latter can be assured if we require $\lambda_n$ to diminish at a slower rate than that of $l(G|D_n) - l(G_0|D_n)$. We show that this rate is $n^{-1}$ for complete samples and $n^{-1/2}$ in case of missing data. 

We moreover show that the consistency sufficient conditions, $\lambda_n=o(1)$ and $n^{-1/2}\lambda_n^{-1}=o(1)$, become essentially necessary. More precisely, the necessity is guaranteed if the following condition is satisfied. As usual $Pa_i$ and $Pa_i^0$ denote the parent sets of $G$ and $G_0$, respectively. 
\begin{con}
\label{eq:assumption_incons}
There are $G\in\mathcal{G}$ with $G_0 \subset G$ and $i\in\{1,...,N\}$ such that $Pa_j=Pa_j^0$ for all $j\ne i$, $Pa_i\backslash Pa_i^0 \ne \emptyset$ and $P(Z_{Pa_i\backslash Pa_i^0} = 1 | Z_i=1,Z_{Pa_i^0}=1) \in (0,1)$.
\end{con}
In words, the condition refers to the possibility of extending the parent set of a node of $G_0$ by one or more new nodes that are, conditionally, neither always observed nor never observed (thus $G_0$ must not be a maximal DAG in $\mathcal{G}$). 


Next, we summarize the above observations in the following theorem. 
\begin{thm}
\label{th:consistency}
Let $G_0$ be identifiable in $\mathcal{G}$ and $S$ be a scoring function \eqref{eq:score} with penalization parameter $\lambda_n$ such that $\lambda_n\to 0$.  The following are satisfied. 
\begin{enumerate}
\item[(i)]{If $\beta(\mathcal{G}) \in (0,1)$ and $\sqrt{n} \lambda_n\to \infty$, then $\hat G$ is consistent estimator of $G_0$. 
}
\item[(ii)]{If $\beta(\mathcal{G}) = 1$ and $n \lambda_n\to \infty$, then $\hat G$ is consistent estimator of $G_0$. 
}
\item[(iii)]{If ${\bf Z}$ is MCAR, Condition \ref{eq:assumption_incons} holds and $\underline{\lim}\sqrt{n} \lambda_n < \infty$, then $\hat G$ is inconsistent estimator of $G_0$. 
}
\end{enumerate}
\end{thm}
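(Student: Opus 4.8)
The plan is to derive parts (i) and (ii) from Proposition \ref{th:mle_cons} by checking its two sufficient conditions (C1) and (C2), and to obtain part (iii) directly, by exhibiting---along a suitable subsequence---an over-fitted model that outscores $G_0$ with probability bounded away from zero.

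For (C1) I would take $G_0\subseteq G_1$, $G_0\nsubseteq G_2$. Identifiability gives $l(G_2|G_0)<l(G_0)$, and identifiability combined with the monotonicity of the population NAL (Lemma \ref{lemma:lincrease}) forces $l(G_1|G_0)=l(G_0)$, so $\delta:=l(G_1|G_0)-l(G_2|G_0)>0$. Since $\mathcal{G}$ is finite, $h$ is bounded on it and $\lambda_n\bigl(h(G_1)-h(G_2)\bigr)\to 0$; together with $l(G_i|D_n)\to_p l(G_i|G_0)$ from Lemma \ref{th:l_unif_conv} this yields $S(G_1|D_n)-S(G_2|D_n)\to_p\delta>0$, i.e. (C1). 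This step uses only $\lambda_n\to 0$ and is therefore common to (i) and (ii). For (C2) I would take $G_0\subseteq G_1$, $G_0\subset G_2$, $h(G_1)<h(G_2)$; now both models contain $G_0$, so $l(G_1|G_0)=l(G_2|G_0)=l(G_0)$ and the population NALs no longer separate them. I would write
\[
S(G_1|D_n)-S(G_2|D_n)=\bigl[(l(G_1|D_n)-l(G_0|D_n))-(l(G_2|D_n)-l(G_0|D_n))\bigr]+\lambda_n\bigl(h(G_2)-h(G_1)\bigr),
\]
with a strictly positive penalty difference, and show the bracket is negligible relative to $\lambda_n$. In the missing-data case (i), Lemma \ref{th:l_unif_conv} bounds each of $l(G_1|D_n)-l(G_0|D_n)$ and $l(G_2|D_n)-l(G_0|D_n)$ by $O_p(n^{-1/2})$, so the bracket is $O_p(n^{-1/2})=o_p(\lambda_n)$ precisely because $\sqrt n\,\lambda_n\to\infty$. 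In the complete-data case (ii), $n\,l(G|D_n)=\max_\theta\log\mathcal{L}(G,\theta|D_n)$, so $2n\,[l(G_j|D_n)-l(G_0|D_n)]$ is a likelihood-ratio statistic for the nested multinomial models $\mathcal{M}(G_0)\subseteq\mathcal{M}(G_j)$ with $P_0\in\mathcal{M}(G_0)$; by the classical chi-squared (or chi-bar-squared, on the boundary) limit it is $O_p(1)$, so the bracket is $O_p(n^{-1})=o_p(\lambda_n)$ exactly because $n\lambda_n\to\infty$. Either way $\lambda_n^{-1}[S(G_1|D_n)-S(G_2|D_n)]\to_p h(G_2)-h(G_1)>0$, giving (C2), and Proposition \ref{th:mle_cons} then delivers (i) and (ii).

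For part (iii) I would use Condition \ref{eq:assumption_incons} to pick $G\in\mathcal{G}$ with $G_0\subset G$ differing from $G_0$ only at node $i$, where $Pa_i=Pa_i^0\cup W$, $W\neq\emptyset$, and $p:=P(Z_W=1|Z_i=1,Z_{Pa_i^0}=1)\in(0,1)$. Then $l(G|D_n)-l(G_0|D_n)=l(X_i|Pa_i,D_n)-l(X_i|Pa_i^0,D_n)$, the difference between the empirical conditional entropy of $X_i$ given $Pa_i^0$ computed on the $n_i(Pa_i^0)$ records with $(X_i,Pa_i^0)$ observed and that of $X_i$ given $Pa_i$ computed on the strictly smaller subsample of $n_i(Pa_i)$ records with $W$ additionally observed. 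Under MCAR both subsamples estimate the same law $P_0(X_i|Pa_i^0)$; moreover $W$ consists of non-descendants of $X_i$ in $G_0$ (since $G_0\subseteq G$ is acyclic), so by LMP $H(X_i|Pa_i)=H(X_i|Pa_i^0)$ and the empirical conditional mutual information $\hat I(X_i;W|Pa_i^0)$ is $O_p(n^{-1})$. Hence, to leading order, $l(G|D_n)-l(G_0|D_n)$ is the difference of two estimators of one and the same conditional entropy, one from a sample and one from a strict subsample whose complement also grows linearly (because $p<1$), and a delta-method/CLT argument should give $\sqrt n\,[l(G|D_n)-l(G_0|D_n)]\to_d V$ with $V$ a non-degenerate, mean-zero Gaussian (the $O(n^{-1})$ entropy biases washing out at the $\sqrt n$ scale), under the mild regularity that the relevant conditional distributions of $X_i$ are non-degenerate. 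Picking a subsequence $n_m$ with $\sqrt{n_m}\lambda_{n_m}\to L<\infty$ (possible since $\underline{\lim}\sqrt n\lambda_n<\infty$) and setting $c:=h(G)-h(G_0)>0$, I would conclude
\[
P\bigl(S(G|D_{n_m})>S(G_0|D_{n_m})\bigr)=P\bigl(\sqrt{n_m}\,[l(G|D_{n_m})-l(G_0|D_{n_m})]>\sqrt{n_m}\lambda_{n_m}c\bigr)\longrightarrow P(V>Lc)>0,
\]
a non-degenerate Gaussian charging every half-line; since $\hat G=G_0$ entails $S(G_0|D_n)\geq S(G|D_n)$, this gives $\liminf_m P(\hat G\neq G_0)\geq P(V>Lc)>0$, i.e. inconsistency.

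The hard part will be the distributional analysis in (iii): showing that $\sqrt n\,[l(G|D_n)-l(G_0|D_n)]$ has a genuinely \emph{non-degenerate} limit. One must handle the two overlapping random subsamples cut out by $\{Z_i=1,Z_{Pa_i^0}=1\}$ and $\{Z_i=1,Z_{Pa_i}=1\}$, justify linearizing the smooth map $\theta\mapsto\sum_j\theta_{i,j}\sum_k\theta_{i,kj}\log\theta_{i,kj}$ around its probability limit, control the entropy bias so it does not survive $\sqrt n$-scaling, and---above all---check that the limiting variance is strictly positive. This is where a non-degeneracy condition on $\theta_0$ is needed, and where the strict inequality $p<1$ in Condition \ref{eq:assumption_incons} is essential: $p=1$ would collapse the two subsamples and drop the rate back to $n^{-1}$, restoring consistency. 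By contrast, directions (i)--(ii) are comparatively routine once Lemmas \ref{lemma:lincrease} and \ref{th:l_unif_conv} and the standard likelihood-ratio asymptotics are in hand.
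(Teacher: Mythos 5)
Your overall skeleton coincides with the paper's: conditions (C1)--(C2) of Proposition \ref{th:mle_cons} for parts (i)--(ii), and an explicit over-fitting model along a subsequence for part (iii); the treatment of (C1) and of part (i) (identifiability plus the $O_p(n^{-1/2})$ rate of Lemma \ref{th:l_unif_conv}, with the penalty dominating because $\sqrt n\,\lambda_n\to\infty$) is essentially identical to the paper. For part (ii) you take a genuinely different route: you identify $2n\bigl[l(G_j|D_n)-l(G_0|D_n)\bigr]$ with a likelihood-ratio statistic for nested multinomial models and invoke classical Wilks-type asymptotics to get the $O_p(n^{-1})$ rate. That is legitimate (it is essentially the Haughton-style argument the paper acknowledges covers the complete-data case), but it imports regularity of the nested families, whereas the paper stays self-contained: it writes the node-wise difference as the mixture-versus-marginal entropy gap \eqref{eq:difflik_paext} and proves the elementary second-order Taylor bound of Lemma \ref{lemma:theta_log_theta_conv}, completeness entering only through $\sum_m\gamma_m=1$ and $\sum_m\gamma_m\hat\theta_{i,kjm}=\hat\theta_{i,kj}$. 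The paper's version has the added benefit that exactly this computation is recycled in part (iii) via Remark \ref{rem:on_convergence}.

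For part (iii) your plan is the paper's plan, but the decisive step is asserted rather than proved: you state that $\sqrt n\,[l(X_i|Pa_i,D_n)-l(X_i|Pa_i^0,D_n)]$ converges to a nondegenerate mean-zero Gaussian and yourself flag this as the hard part, so as written the necessity direction has a genuine gap. The paper closes it in three moves: (a) on the subsample $\tilde D_n$ where $(X_i,Pa_i)$ is observed, the overfit term $l(X_i|Pa_i,\tilde D_n)-l(X_i|Pa_i^0,\tilde D_n)$ is $O_p(n^{-1})$ by Remark \ref{rem:on_convergence}, since both terms are computed on the same records; (b) replacing the plug-in tables $\hat\theta_n$ and $\tilde\theta_n$ by $\theta_0$ costs only $O_p(n^{-1})$, by a Taylor expansion at the maximizer where the gradient vanishes; (c) what survives at scale $\sqrt n$ is the difference between the subsample and full-sample averages of the fixed i.i.d.\ variables $l(x^s|Pa_i^0,\theta_0)$, where under MCAR and Condition \ref{eq:assumption_incons} the subsample is a draw without replacement of Binomial$(a,n)$ size, and Lemma \ref{lemma:draw_without_replace_clt} (a Lindeberg--Feller argument conditional on the drawn index set) yields the limit $\mathcal N\bigl(0,\tfrac{1-a}{a}\,\mathrm{Var}(l(X_i|Pa_i^0,\theta_0))\bigr)$. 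Note the two index sets you call overlapping are in fact nested, which is what makes this sampling-without-replacement reduction available. Your concern about strict positivity of the limiting variance is well placed: the paper disposes of it by appealing to identifiability of $G_0$, and some nondegeneracy of the conditional law of $X_i$ given $Pa_i^0$ is indeed what is needed; your proposal correctly names this but, like the CLT itself, does not establish it.
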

The complete data case of the theorem, $(ii)$, also follows from a more general result by \cite{haughton} (Proposition 1.2 and Remark 1.2). There, the consistency result is derived using the properties of MLE for exponential families and central limit theorem. The essential contribution of the above theorem is in the missing data cases $(i)$ and $(iii)$. We emphasize that case $(i)$ holds for a general $\mathcal{G}$ and missing data distribution as long as $G_0$ is identifiable in $\mathcal{G}$. In $(iii)$ however, we require for ${\bf Z}$ to be MCAR in order to guarantee that the condition $\sqrt{n} \lambda_n\to \infty$ is necessary for consistent estimation. 
Below we make some further remarks. 

The claims of the theorem are established by verifying conditions (C1) and (C2) from Proposition \ref{th:mle_cons} for $G_0$ and hence, for any DAG equivalent to $G_0$. Therefore, it follows from Corollary \ref{cor:equ_cons} that the theorem remains true if we replace $G_0$ by $[G_0]$ and $\hat G$ by $[\hat G]$. The theorem thus provides conditions for consistent estimation of the equivalence class of $G_0$. 

As evident from the proof of the theorem, the requirement $\lambda_n\to 0$ is needed for guaranteeing the first, (C1), consistency condition in Proposition \ref{th:mle_cons}, while $\sqrt{n}\lambda_n\to \infty$ ($n\lambda_n\to \infty$) is required for the second one (C2). The AIC selection criterion, $\lambda_n = 1/n$, is not consistent for it satisfies (C1) but fails to satisfy (C2), regardless of $\beta$. It will thus recover the true structure but will tend to select networks with higher complexities than the true one. Therefore AIC is prone to overfitting and so is any scoring function with $n\lambda_n = O(1)$. At the other end of the consistency spectrum of $\alpha$, $\lim_n\sup \lambda_n > 0$, the estimated networks will tend to have complexities below the true one. Due to the missingness, there is an implication regarding the BIC(MDL) criterion, $\lambda_n = 0.5 \log(n)/n$. Because $n\log(n)/n\rightarrow \infty$ but $\sqrt{n}\log(n)/n\rightarrow 0$, BIC is guaranteed to be consistent only in the complete data case and it will be, in general, inconsistent in MCAR settings (see the corollaries that follow). The numerical results presented in Section \ref{ch:experiments} confirm this conclusion. 

Theorem \ref{th:consistency} requires the observation probability $\beta(\mathcal{G})$ to be fixed. If we allow it to depend on $n$, case $(ii)$ of the theorem arises from $(i)$ if we have $\lim_n \beta_n(\mathcal{G}) = 1$. Then $n\lambda_n\to\infty$ is a sufficient consistency condition. There is no contradiction with case (iii), since then it must be that $P(Z_{X_i}=1, Z_{Pa_i}=1) = 1$ and $P(Z_{X_i}=1, Z_{Pa_i^0}=1) = 1$, and hence Condition \ref{eq:assumption_incons} fails. As evident from the proof of the theorem, when $\overline\lim\beta_n < 1$, $(i)$ and $(iii)$ still hold. We leave undecided the last alternative $0 < \underline\lim\beta_n < \overline\lim\beta_n = 1$. 

Next, we argue that Condition \ref{eq:assumption_incons} arises naturally in MCAR settings. 
In the probability space of all MCAR distributions for ${\bf Z}$ defined by the Borel sets in
$\{u\in[0,1]^{2^N-1}, \sum_{k=1}^{2^N-1} u_k \le 1 \}$ 
(a distribution $u$ is defined by assigning each of the $2^N$ states of ${\bf Z}$ a probability value in [0,1] such that their sum is 1), the subspace of distributions for which $P(Z_{Pa_i\backslash Pa_i^0} = 1 | Z_i=1,Z_{Pa_i^0}=1)=0\textrm{ or }1$ has Borel measure zero. We thus have the following consequences of Theorem \ref{th:consistency} which extend Corollary \ref{cor:ident_inorder} and \ref{cor:ident_eqclass}, and essentially summarize the practical contribution of this investigation. 
\begin{cor}
\label{th:suffcond}
Let $(G_0, P_0)$ be a non-maximal DBN and $\mathcal{G}$ consist of all DAGs compatible with a node order of $G_0$. Then, for almost all MCAR distributions, $\hat G$ is consistent estimator of $G_0$ if and only if $\lambda_n\to 0$ and $\sqrt{n} \lambda_n\to \infty$. 
\end{cor}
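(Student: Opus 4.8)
The plan is to deduce the corollary directly from Theorem~\ref{th:consistency} by checking, for the pair $(\mathcal{G},G_0)$ and for every MCAR law of ${\bf Z}$ outside a Lebesgue-null set, the structural hypotheses of parts~(i) and~(iii) of that theorem. Here ${\bf Z}$ ranges over the simplex $\Delta=\{u\in[0,1]^{2^N-1}:\sum_k u_k\le1\}$, and ``almost all'' will mean: off a finite union of proper algebraic --- in fact coordinate --- subsets of $\Delta$, one for each of the finitely many conditions isolated below. Granting these checks, the `if' direction is exactly case~(i) of Theorem~\ref{th:consistency} (case~(ii) does not apply, since $\beta(\mathcal{G})<1$ generically), while the `only if' direction follows from case~(iii) by contraposition, together with the elementary remark that a persistently large penalty causes under-fitting.

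First I would secure identifiability of $G_0$ in $\mathcal{G}$. Since $\mathcal{G}$ is the set of all DAGs compatible with a node order $\Omega$ of $G_0$ and ${\bf Z}$ is MCAR, Corollary~\ref{cor:ident_inorder} applies as soon as $\theta_i^0>0$ for every $i$, and the offending set $\{u:\exists i,\ P(Z_i=1,Z_{Pa_i^0}=1)=0\}$ is a union of proper coordinate subspaces of $\Delta$, hence null. Next, $\beta(\mathcal{G})\in(0,1)$ in the sense of \eqref{eq:assumption_beta}: $\beta(\mathcal{G})>0$ is automatic because $\mathcal{G}$ is finite and only strictly positive $\theta_i(G|G_0)=P(Z_i=1,Z_{Pa_i}=1)$ enter the minimum, while $\beta(\mathcal{G})<1$ fails only on the null set where some $\theta_i(G|G_0)=1$ --- again a union of proper coordinate subspaces, since for $N\ge2$ there is always a state of ${\bf Z}$ violating $Z_i=1,\,Z_{Pa_i}=1$.

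The bulk of the work is the generic verification of Condition~\ref{eq:assumption_incons}. Non-maximality of $G_0$ in $\mathcal{G}$ gives $\tilde G\in\mathcal{G}$ with $G_0\subsetneq\tilde G$; adjoining to $G_0$ a single edge $X_m\to X_i$ of $\tilde G$ yields a DAG $G$ with $Pa_j=Pa_j^0$ for $j\ne i$ and $Pa_i=Pa_i^0\cup\{X_m\}$, and, since $\tilde G$ is $\Omega$-compatible so that $X_m\prec X_i$, this $G$ remains $\Omega$-compatible; hence $G\in\mathcal{G}$ with $G_0\subsetneq G$ and $Pa_i\setminus Pa_i^0=\{X_m\}\ne\emptyset$. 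It remains to show $P(Z_m=1\mid Z_i=1,Z_{Pa_i^0}=1)\in(0,1)$: on $\Delta$ both the numerator $P(Z_m=1,Z_i=1,Z_{Pa_i^0}=1)$ and the denominator $P(Z_i=1,Z_{Pa_i^0}=1)$ are coordinate sums in $u$, so each of the loci ``ratio $=0$'', ``ratio $=1$'' and ``denominator $=0$'' is a proper coordinate subspace, properness using that $m\notin Pa_i^0\cup\{i\}$ (which forces the existence of ${\bf Z}$-states realizing every relevant pattern). This is exactly the measure-zero assertion made in the paragraph before the corollary; the only non-mechanical parts are the order-compatibility bookkeeping of the one-edge reduction and the check that none of these defining relations holds identically on $\Delta$.

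Assembling the pieces: for `if', the conditions $\lambda_n\to0$, $\sqrt n\lambda_n\to\infty$, $\beta(\mathcal{G})\in(0,1)$ and identifiability of $G_0$ are precisely the hypotheses of Theorem~\ref{th:consistency}(i), so $P(\hat G\ne G_0)\to0$. For `only if', assume $\hat G$ is consistent. Then $\underline{\lim}\sqrt n\lambda_n=\infty$ must hold, for otherwise Theorem~\ref{th:consistency}(iii) --- whose hypotheses (${\bf Z}$ MCAR and Condition~\ref{eq:assumption_incons}) we have just secured --- would render $\hat G$ inconsistent; and $\lambda_n\to0$ is needed by the familiar under-fitting argument, that along any subsequence on which $\lambda_n$ stays bounded away from $0$ the penalty $\lambda_n h$ eventually swamps the $O_p(n^{-1/2})$ fluctuations of $l(\cdot|D_n)-l(\cdot|G_0)$ from Lemma~\ref{th:l_unif_conv}, so that comparing $S(G_0|D_n)$ to the score of a strict subgraph of $G_0$ gives $P(\hat G\ne G_0)\not\to0$. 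The step I expect to be genuinely delicate is not the invocation of Theorem~\ref{th:consistency} but this last normalization: the comparison is vacuous exactly when $G_0$ is $\subseteq$-minimal in $\mathcal{G}$, so strictly speaking it also asks that $G_0$ be non-trivial (have at least one edge), and pinning down the precise borderline for genuinely constant $\lambda_n$ --- immaterial to the stated consequence --- is the one loose end; everything else is a direct transcription of the theorem's hypotheses.
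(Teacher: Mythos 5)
Your proposal follows essentially the same route as the paper: the corollary is derived there directly from Theorem \ref{th:consistency}(i) and (iii), Corollary \ref{cor:ident_inorder} for identifiability under MCAR, and the measure-zero observation in the preceding paragraph (that $P(Z_{Pa_i\setminus Pa_i^0}=1\mid Z_i=1,Z_{Pa_i^0}=1)\in\{0,1\}$ only on a Borel-null set of MCAR laws), which is exactly the genericity check you carry out, with your one-edge extension of $G_0$ inside an $\Omega$-compatible supergraph supplying Condition \ref{eq:assumption_incons} from non-maximality. The only place you go beyond the paper is the necessity of $\lambda_n\to 0$, which the paper does not argue either; note that your sketch of it is slightly off — against a strict subgraph of $G_0$ the NAL deficit is a fixed $\delta>0$ rather than $O_p(n^{-1/2})$ fluctuations, so a small constant penalty need not force under-fitting — which is precisely the loose end you yourself flag.
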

In the last statement we assume that $\mathcal{G}$ comprises all DAGs on ${\bf X}$ and use the global identifiability of $[G_0]$. 
\begin{cor}
\label{th:suffcondeqclass}
Provided that $\mathcal{I}(G_0)$ is non-empty, for almost all MCAR distributions, $[\hat G]$ is consistent estimator of $[G_0]$ if and only if $\lambda_n\to 0$ and $\sqrt{n} \lambda_n\to \infty$. 
\end{cor}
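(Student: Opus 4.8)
The plan is to reduce the statement to Theorem \ref{th:consistency} together with Corollary \ref{cor:equ_cons} (or equivalently the remark that the claims of Theorem \ref{th:consistency} verify (C1) and (C2) for every DAG equivalent to $G_0$), and then to discharge the ``almost all MCAR distributions'' qualifier by the measure-zero argument already recorded in the text preceding Corollary \ref{th:suffcond}. First I would fix a representative $G_0$ with $\mathcal{I}(G_0) = \mathcal{I}(G_0)$ non-empty and take $\mathcal{G}$ to be the set of all DAGs on ${\bf X}$. By Corollary \ref{cor:ident_eqclass}, since $\mathbf Z$ is MCAR and $\mathcal{G}$ trivially contains an element of $[G_0]$ (namely $G_0$ itself), the class $[G_0]$ is globally identifiable; this is what lets us invoke Theorem \ref{th:consistency} with $\mathcal{G}$ equal to the full set of DAGs.

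Next I would argue the ``if'' direction. Assume $\lambda_n\to 0$ and $\sqrt n\,\lambda_n\to\infty$. Since the missingness is genuine MCAR with at least one coordinate not almost surely observed jointly with the relevant parents — and here is where I need $\beta(\mathcal{G})\in(0,1)$ rather than $=1$; I would note that for almost all MCAR distributions $\beta(\mathcal{G})<1$, because the set of MCAR laws making every $\theta_i(G|G_0)$ equal to $0$ or $1$ has Borel measure zero in the simplex $\{u\in[0,1]^{2^N-1}:\sum u_k\le 1\}$ — we are in case (i) of Theorem \ref{th:consistency}, so $\hat G$ is consistent for $G_0$. By the remark following Theorem \ref{th:consistency}, conditions (C1) and (C2) hold not just for $G_0$ but for every DAG in $[G_0]$, and hence by Corollary \ref{cor:equ_cons} we get $P(\mathcal{I}(\hat G)\ne\mathcal{I}(G_0))\to 0$, i.e.\ $[\hat G]$ is consistent for $[G_0]$.

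For the ``only if'' direction I would prove the contrapositive in two pieces, mirroring the structure of Theorem \ref{th:consistency}. If $\lambda_n\not\to 0$, then along a subsequence $\lambda_n$ stays bounded away from $0$; on that subsequence the penalty never washes out, so (by the argument behind (C1) failing, as in the discussion of the $\limsup\lambda_n>0$ regime) the maximizer tends to select DAGs of complexity strictly below $df(G_0)$, which cannot be equivalent to $G_0$ since equivalent DAGs share the same $df$; hence $P(\mathcal{I}(\hat G)\ne\mathcal{I}(G_0))\not\to 0$. If instead $\lambda_n\to 0$ but $\underline\lim\sqrt n\,\lambda_n<\infty$, I would verify Condition \ref{eq:assumption_incons} for almost all MCAR laws — which is exactly the measure-zero statement quoted before Corollary \ref{th:suffcond}: the set where $P(Z_{Pa_i\setminus Pa_i^0}=1\mid Z_i=1,Z_{Pa_i^0}=1)\in\{0,1\}$ for the relevant extension is Borel-null, and a non-maximal DAG in the full set of DAGs always admits such a one-node extension at some $i$ (here I would also note $\mathcal I(G_0)\ne\emptyset$ guarantees $G_0$ is not the complete DAG, so some extension exists) — and then apply case (iii) of Theorem \ref{th:consistency} to conclude $\hat G$, and a fortiori $[\hat G]$ via the $df$-equivalence observation again, is inconsistent.

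The main obstacle I expect is the bookkeeping around the ``almost all'' quantifier interacting with the equivalence-class reduction: one must make sure that the null set of MCAR distributions is chosen \emph{after} fixing $G_0$ but uniformly over all DAGs equivalent to $G_0$ (a finite union of null sets, hence still null), and that the non-maximality needed for Condition \ref{eq:assumption_incons} is automatic once $\mathcal I(G_0)\ne\emptyset$ and $\mathcal G$ is the full DAG set. The analytic content is entirely inherited from Theorem \ref{th:consistency}; the work here is purely the reduction and the measure-theoretic genericity statement, both of which are essentially spelled out already in the surrounding text.
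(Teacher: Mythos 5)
Your reduction is essentially the paper's own: global identifiability of $[G_0]$ in the set of all DAGs via Corollary \ref{cor:ident_eqclass}, consistency from Theorem \ref{th:consistency}(i) lifted to equivalence classes through Corollary \ref{cor:equ_cons} and the remark following the theorem, and necessity of $\sqrt{n}\lambda_n\to\infty$ from Theorem \ref{th:consistency}(iii), with Condition \ref{eq:assumption_incons} holding for almost all MCAR laws because the excluded conditional observation probabilities form a Borel-null set and $\mathcal{I}(G_0)\ne\emptyset$ guarantees every member of $[G_0]$ is non-maximal. The finite-union-of-null-sets bookkeeping over the (finitely many) DAGs equivalent to $G_0$ is also the right reading of the ``almost all'' quantifier, and your observation that $\sqrt{n}\lambda_n\to\infty$ could equally be routed through case (ii) when $\beta=1$ is harmless.

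The one step that does not hold up is your branch $\lambda_n\not\to 0$. Under MCAR the population NAL $l(G|G_0)$ does not depend on the distribution of ${\bf Z}$ at all (it is determined by $P_0$, cf.\ Proposition \ref{th:nal_increase}), so the identifiability gaps $l(G_0)-l(G|G_0)$ for $G\nsupseteq G_0$ are fixed positive numbers; if, for instance, $\lambda_n$ is a constant smaller than the minimal gap divided by the maximal complexity difference, the population score $l(G|G_0)-\lambda h(G)$ is still maximized exactly on $[G_0]$ and the estimator remains consistent, so ``the maximizer tends to select DAGs of complexity strictly below $df(G_0)$'' cannot be inferred from $\liminf\lambda_n>0$ alone. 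The paper does not cover this branch either: Theorem \ref{th:consistency} carries $\lambda_n\to 0$ as a standing hypothesis, and the rigorous content of the ``only if'' (here and in Corollary \ref{th:suffcond}) is the necessity of $\sqrt{n}\lambda_n\to\infty$ within that regime; so either restrict the necessity claim to that regime (or to scales $\lambda_n=\lambda_0 n^{-\alpha}$ as used in the experiments) or supply a genuinely new argument for $\lambda_n\not\to0$ --- the one you sketch would fail. A smaller point worth a sentence in a careful write-up: Theorem \ref{th:consistency}(iii) as proved exhibits, with non-vanishing probability, a strict superset beating $G_0$, which rules out $\hat G=G_0$ on that event but not $\hat G\in[G_0]$; to conclude inconsistency of $[\hat G]$ one should note that the same one-parent extension argument applies to each member of $[G_0]$ (or argue directly with the score comparison against the best-scoring element of $[G_0]$), rather than appeal only to equality of $df$ across $[G_0]$.
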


Note that, the non-emptiness of $\mathcal{I}(G_0)$ is required in order for any DAG equivalent to $G_0$ to be non-maximal and hence, for Condition \ref{eq:assumption_incons} to hold. 

\section{Numerical experiments}\label{ch:experiments}

With the number of possible DAGs being super-exponential to the number of nodes, the task of reconstructing a DBN from data is in general NP-hard. The MLE based problem \eqref{eq:G_hat} essentially requires exhausting all DAGs in $\mathcal{G}$. For the purpose of numerical illustration in this section we make two simplifying the inference assumptions - that the causal order of the nodes of the original DBN $G_0$ is known, as well as the maximum size of the parent sets of $G_0$, its in-degree. We thus assume that the search set $\mathcal{G}$ comprises all DAGs compatible with a true node order. By Corollary \ref{cor:ident_inorder}, when the missing data model is MCAR, $G_0$ is identifiable in $\mathcal{G}$. In our numerical experiments we use exclusively the complexity function $df$, which recall is given by $df(G,\theta) = dim(\Theta_G)$, and the decomposable scoring function \eqref{eq:score_n}. Then \eqref{eq:G_hat} can be solved by an efficient exhaustive search via dynamic programming, an approach that is implemented in the {\it catnet} package for {\it R}, \cite{catnet}. We are aware that more general learning algorithms are available in the literature that can also accommodate available case analysis based on NAL. For example, one can implement a search based on local optimizations as described in \cite{chickering} by replacing the usual log-likelihood with NAL. However, our goal here is not to compare different learning strategies but to empirically verify the conclusions of Theorem \ref{th:consistency}, which hold for all NAL-based estimators \eqref{eq:G_hat}. 

The standard AIC and BIC model selection criteria are compared to scoring functions with $\lambda_n = (1/N)n^{-\alpha}$ for different choices of $\alpha\in(0,1)$. The factor $1/N$, to some extent arbitrary, makes the penalization relatively small for not large $n$ (note that NAL is of rate $O(N)$). For this choice of $\lambda_n$ and small $n$, the estimator $\hat{G}$ therefore may over-fit the data but, provided the scoring criteria is consistent, $df(\hat{G})$ should approach the true complexity as $n$ increases.  

\begin{table}
\caption{Consistency results for a simulated 2-node network. Two possible models $G_0$ and $G_1$ are considered as described in the main text. Shown are the percents of wrong selections (choosing the alternative $G_1$ instead of the true model $G_0$). }
\begin{center}
\begin{tabular}{|l|l|l|l|l|l|l|l|l|l|}
\hline
 $\alpha$ & 0.2 & 0.3 & 0.4 & 0.5 & 0.6 & 0.7 & 0.8 & BIC & AIC \\ \hline
 
 \multicolumn{10}{|c|}{$n=10^2$} \\ \hline 
$\beta=1$ & 0.0 & 0.0 & 0.0 & 0.3 & 0.9 & 3.5 & 10.6 & 2.8 & 16.0 \\   
 $\beta=0.99$ & 0.0 & 0.0 & 0.0 & 0.5 & 1.7 & 6.6 & 17.0 & 4.5 & 22.9 \\   
 $\beta=0.95$ & 0.0 & 0.0 & 0.2 & 0.9 & 3.8 & 12.8 & 24.0 & 8.7 & 31.2 \\   
 $\beta=0.90$ & 0.0 & 0.0 & 0.0 & 0.7 & 6.9 & 16.6 & 31.5 & 12.5 & 37.0 \\   
 $\beta=0.75$ & 0.0 & 0.0 & 1.1 & 7.0 & 18.5 & 29.9 & 40.2 & 27.3 & 44.4 \\   \hline   
 \multicolumn{10}{|c|}{$n=10^3$} \\ \hline 
 $\beta=1$ & 0.0 & 0.0 & 0.0 & 0.0 & 0.0 & 0.2 & 3.6 & 0.7 & 13.9 \\   
 $\beta=0.99$ & 0.0 & 0.0 & 0.0 & 0.0 & 0.0 & 1.6 & 13.3 & 2.9 & 33.5 \\   
 $\beta=0.95$ & 0.0 & 0.0 & 0.0 & 0.0 & 0.4 & 12.1 & 28.8 & 17.1 & 42.0 \\   
 $\beta=0.90$ & 0.0 & 0.0 & 0.0 & 0.1 & 3.6 & 19.1 & 34.7 & 23.0 & 43.9 \\   
 $\beta=0.75$ & 0.0 & 0.0 & 0.0 & 1.9 & 15.8 & 33.2 & 42.4 & 36.2 & 47.2 \\   
 \hline   \multicolumn{10}{|c|}{$n=10^4$} \\ \hline 
 $\beta=1$ & 0.0 & 0.0 & 0.0 & 0.0 & 0.0 & 0.0 & 0.8 & 0.2 & 15.0 \\   
 $\beta=0.99$ & 0.0 & 0.0 & 0.0 & 0.0 & 0.0 & 2.7 & 24.7 & 13.9 & 44.1 \\   
 $\beta=0.95$ & 0.0 & 0.0 & 0.0 & 0.0 & 1.5 & 21.3 & 37.7 & 31.6 & 47.8 \\   
 $\beta=0.90$ & 0.0 & 0.0 & 0.0 & 0.0 & 7.0 & 28.9 & 41.5 & 36.5 & 47.5 \\   
 $\beta=0.75$ & 0.0 & 0.0 & 0.0 & 1.8 & 22.3 & 41.2 & 50.5 & 47.3 & 53.8 \\   
 \hline   \multicolumn{10}{|c|}{$n=10^5$} \\ \hline 
 $\beta=1$ & 0.0 & 0.0 & 0.0 & 0.0 & 0.0 & 0.0 & 0.0 & 0.0 & 17.7 \\   
 $\beta=0.99$ & 0.0 & 0.0 & 0.0 & 0.0 & 0.0 & 11.8 & 37.8 & 35.8 & 50.4 \\   
 $\beta=0.95$ & 0.0 & 0.0 & 0.0 & 0.0 & 3.5 & 31.3 & 44.6 & 43.3 & 49.8 \\   
 $\beta=0.90$ & 0.0 & 0.0 & 0.0 & 0.0 & 13.1 & 36.1 & 47.2 & 46.0 & 50.5 \\   
 $\beta=0.75$ & 0.0 & 0.0 & 0.0 & 1.0 & 21.4 & 38.5 & 45.5 & 45.3 & 48.2 \\   
 \hline  
\end{tabular}
\end{center}
\label{table:abtest}
\end{table}

\subsection{Simulated 2-node network}

Here we consider a simplest possible example to verify the consistency of the NAL estimator \eqref{eq:G_hat}. 
We generate samples from a model $G_0$ with 2 independent binary variables $X_1$ and $X_2$ (that is $Pa_1=Pa_2=\emptyset$) with marginal probabilities $\theta_1=(0.4,0.6)$ and $\theta_2=(0.3,0.7)$, respectively. We assume that $X_1\prec X_2$ and then the only alternative to $G_0$ BN model is $G_1$ with $Pa_1=\emptyset$ and $Pa_2=\{X_1\}$. We also assume that $X_2$ is always observed ($P(Z_2=1)=1$) but $Z_1$ is MCAR with different missing probabilities $P(Z_1=0) \in \{0, 0.01, 0.05, 0.10, 0.25\}$. The sample observation probability is then $\beta = 1-P(Z_1=0)$.
For each $\beta$ and sample size $n\in\{10^2,10^3,10^4,10^5\}$, we generate 1000 samples $D_n$ and count how many times $S(G_1|D_n) > S(G_0|D_n)$, that is, $\hat G = G_1$ and $G_1$ is erroneously selected instead of $G_0$. Table \ref{table:abtest} summarizes results for different choices of the penalization parameter $\alpha$ as well as BIC and AIC. As expected, all considered scoring functions except AIC are consistent in the no missing data case ($\beta=1$). In presence of missing values however, for scoring functions with $\alpha>0.5$ the percent of false model selections is significant; moreover, it increases when the proportion of missing values increases, suggesting inconsistency. In particular, the inconsistency of BIC is very pronounced for all $\beta<1$.  Even when the proportion of missing values is only 1 percent, $\beta=0.99$, the percent of wrong selections start from 4.5 for $n=10^2$ and climbs to 35.8 for $n=10^5$. The presented results are in strong support of the predictions of Theorem \ref{th:consistency}. 

\subsection{Consistent estimation of the ALARM network}
\label{sec:alarm}

Here we consider a well known in the literature benchmark network. 
ALARM, a medical diagnostic alarm message system for patient monitoring developed by \cite{beinlich}, is a typical example of belief propagation network as those employed in many expert systems. 
The DAG of ALARM has 37 nodes, 45 directed edges, varying number of categories (2,3 and 4) and complexity of 473.
We perform network reconstruction using both complete and MCAR missing data simulated from the network, in order to confirm the effect of missingness on the model selection as predicted by Theorem \ref{th:consistency}. The graph structures of the estimated networks are compared to the original one by the so-called $F$-score, the harmonic mean of precision (${TP}$/$(TP+FP)$) and recall (${TP}$/$(TP+FN)$), where $P$ and $N$ refer to the presence and absence of directed edges. $F$-score of 1 represents perfect reconstruction. 

Missing data samples are simulated by deleting $1$, $2$ and $4$ values from each sample record, completely at random (so, there is not even 1 fully complete record in the samples). Since the maximum parent size is 3, in the first case, the probability to have no missing 3-node subset $(X_i,Pa_i)$ is ${36 \choose 3}/{37 \choose 3}$ and hence, the effective observation probability $\beta$ from \eqref{eq:assumption_beta} is about $0.92$. 
When 2 values per record are deleted, $\beta$ drops to ${35 \choose 3}/{37 \choose 3} \approx 0.84$; when 4 values are deleted, $\beta$ is about $0.70$. 
The target set of models $\mathcal{G}$ includes all DAGs with 37 nodes, maximum of 3 parents per node, compatible with the true node order. Under these constraints, the number of DAGs in $\mathcal{G}$ is 1133. For each possible complexity $t$, the optimal network $\hat{G}(t)$ 
is found and a final selection is made according to their scores. 

Table \ref{table:alarm_results} shows comparison results for 9 scoring criteria 
($\alpha$=0.25,0.3,0.35,0.4,0.45,0.5, 0.75, BIC and AIC) and 7 samples sizes, from 5e2 to 2.5e5. In the complete data case, the scoring functions with $\alpha\in[0.4,0.5]$ and BIC reconstruct the true network for all samples with $n\ge 2.5e4$. As predicted, in the missing data cases the score function for $\alpha=0.5$ and BIC become inconsistent due to overfitting. 
This effect is more clearly demonstrated in Figures \ref{fig:alarm_profile_25e4} and \ref{fig:alarm_profile_n} that show the complexity profile functions for different experimental cases. According to Theorem \ref{th:consistency}, the complexity profiles in the $(0,0.5)$ range should converge to the horizontal line of true complexity. In Figure \ref{fig:alarm_profile_25e4} the sample size is kept fixed and we see that with the increase of the proportion of missing values ($\beta$ decreasing), the profiles depart from the line of true complexity. On the other hand, Figure \ref{fig:alarm_profile_n} shows profiles of samples with fixed proportion of missing values but of increasing size. We observe that, although slow, the profiles get closer to the line of true complexity as $n$ increases. 
It is also evident that the BIC selected complexity drifts up and away from the true one with the increase of the sample size, an indication for its inconsistency. 

\begin{table}
\caption{Model selection results for the ALARM network using complete samples ($\beta=1$) and samples following MCAR models with $\beta=0.84$ and $\beta=0.70$. Shown are the F-scores between the true network and the estimated ones. }
\begin{center}
\begin{tabular}{|l|l|l|l|l|l|l|l|}
\hline
 n & 5e2 & 2.5e3 & 5e3 & 2.5e4 & 5e4 & 1e5 & 2.5e5 \\ \hline
 \multicolumn{8}{|c|}{no missing values, $\beta=1$} \\ \hline 
 $\alpha$ = 0.25 & 0.85 & 0.95 &      0.97  &      0.97  &      0.97  &      0.97  &      0.98 \\   
 $\alpha$ = 0.3  & 0.88 & 0.97 &      0.97  &      0.97  &      0.98  &      0.98  &      0.99 \\   
 $\alpha$ = 0.35 & 0.86 & 0.97 &      0.97  &      0.98  &      0.99  & {\bf 1.00} & {\bf 1.00} \\  
 $\alpha$ = 0.4  & 0.81 & 0.97 &      0.98  & {\bf 1.00} & {\bf 1.00} & {\bf 1.00} & {\bf 1.00} \\   
 $\alpha$ = 0.45 & 0.75 & 0.96 & {\bf 1.00} & {\bf 1.00} & {\bf 1.00} & {\bf 1.00} & {\bf 1.00} \\   
 $\alpha$ = 0.5  & 0.73 & 0.92 &      0.99  & {\bf 1.00} & {\bf 1.00} & {\bf 1.00} & {\bf 1.00} \\   
 $\alpha$ = 0.75 & 0.57 & 0.65 &      0.62  &      0.62  &      0.65  &      0.63  &      0.65 \\   
 BIC             & 0.85 & 0.97 &      0.98  & {\bf 1.00} & {\bf 1.00} & {\bf 1.00} & {\bf 1.00} \\   
 AIC             & 0.80 & 1.84 &      0.82  &      0.81  &      0.79  &      0.80  &      0.80 \\ \hline   
 \multicolumn{8}{|c|}{MCAR, $\beta=0.84$} \\ \hline 
 $\alpha$ = 0.25 & 0.79 & 0.86 & 0.91 & 0.97 & 0.97 & 0.97 & 0.97 \\   
 $\alpha$ = 0.3  & 0.79 & 0.88 & 0.88 & 0.93 & 0.93 & 0.97 & 0.99 \\   
 $\alpha$ = 0.35 & 0.79 & 0.82 & 0.85 & 0.90 & 0.92 & 0.95 & {\bf 1.00} \\   
 $\alpha$ = 0.4  & 0.74 & 0.78 & 0.81 & 0.85 & 0.86 & 0.91 & 0.92 \\   
 $\alpha$ = 0.45 & 0.69 & 0.72 & 0.77 & 0.80 & 0.79 & 0.83 & 0.83 \\   
 $\alpha$ = 0.5  & 0.65 & 0.70 & 0.70 & 0.72 & 0.71 & 0.78 & 0.74 \\   
 $\alpha$ = 0.75 & 0.56 & 0.60 & 0.62 & 0.61 & 0.61 & 0.63 & 0.61 \\   
 BIC             & 0.77 & 0.82 & 0.82 & 0.77 & 0.71 & 0.71 & 0.66 \\   
 AIC             & 0.74 & 0.68 & 0.67 & 0.62 & 0.61 & 0.63 & 0.61 \\ \hline 
 \multicolumn{8}{|c|}{MCAR, $\beta=0.70$} \\ \hline 
 $\alpha$ = 0.25 & 0.76 & 0.83 & 0.88 & 0.93 & 0.97 & 0.97 & 0.97 \\   
 $\alpha$ = 0.3  & 0.75 & 0.80 & 0.86 & 0.89 & 0.91 & 0.93 & {\bf 1.00} \\   
 $\alpha$ = 0.35 & 0.69 & 0.73 & 0.84 & 0.87 & 0.89 & 0.93 & 0.95 \\   
 $\alpha$ = 0.4  & 0.68 & 0.73 & 0.78 & 0.82 & 0.82 & 0.87 & 0.89 \\   
 $\alpha$ = 0.45 & 0.66 & 0.65 & 0.74 & 0.76 & 0.79 & 0.78 & 0.79 \\   
 $\alpha$ = 0.5  & 0.58 & 0.61 & 0.69 & 0.70 & 0.72 & 0.70 & 0.73 \\   
 $\alpha$ = 0.75 & 0.56 & 0.57 & 0.61 & 0.61 & 0.62 & 0.61 & 0.63 \\   
 BIC             & 0.74 & 0.75 & 0.79 & 0.74 & 0.74 & 0.69 & 0.67 \\   
 AIC             & 0.70 & 0.61 & 0.66 & 0.62 & 0.63 & 0.61 & 0.63 \\   
 \hline
\end{tabular}
\end{center}
\label{table:alarm_results}
\end{table}

\begin{figure}
\caption{Estimating the ALARM network from complete and MCAR samples of size $2.5e5$. For $\beta=1,0.92,0.84,0.70$, the so called complexity profile - the complexity of the estimated network on y-axis as a function of the penalization parameter $\alpha$ on x-axis - is shown in the range $[0.25,0.5]$. In presence of missing values ($\beta<1$), the BIC selection (dash, horizontal) tends to move up and away from the true complexity of 473 (solid, horizontal), demonstrating the inconsistency of BIC. }
\includegraphics[scale=0.35,angle=-90]{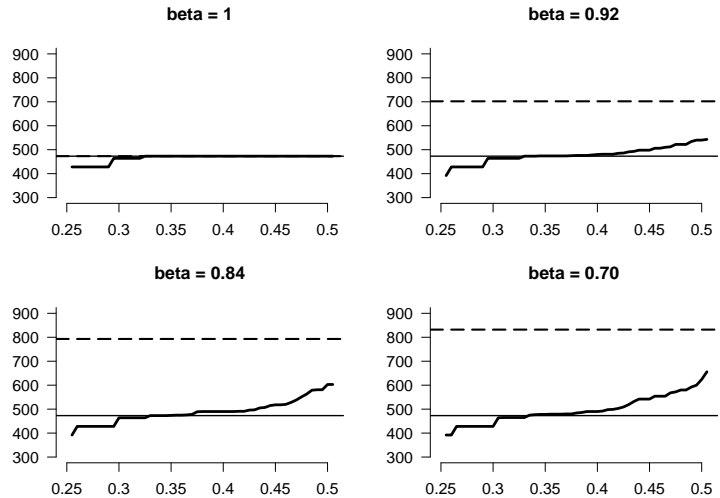}
\label{fig:alarm_profile_25e4}
\end{figure}

\begin{figure}
\caption{Estimating ALARM from MCAR samples with fixed observation probability $\beta=0.84$. Shown are the complexity profiles of 6 samples of increasing size $n$. For $n>1e6$, the complexity of the BIC estimates are off charts ($>900$). }
\includegraphics[scale=0.4,angle=-90]{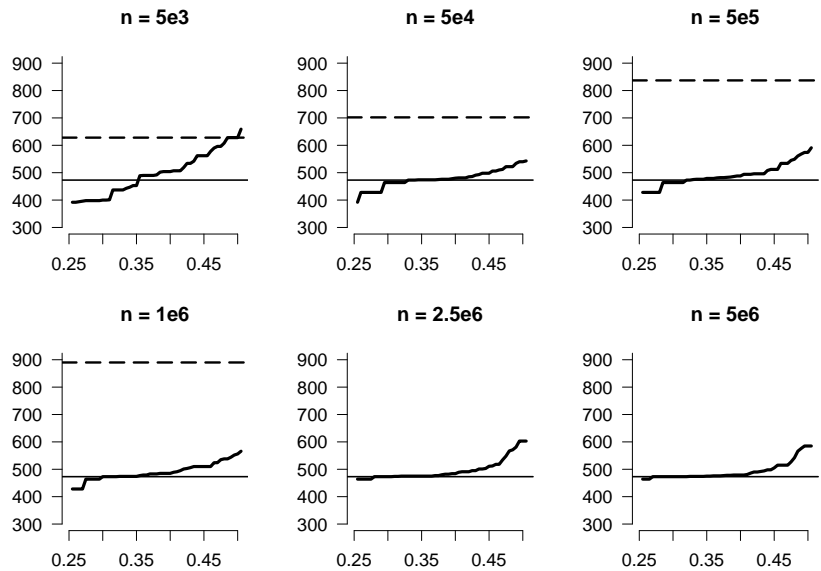}
\label{fig:alarm_profile_n}
\end{figure}

\section{Conclusion}\label{sec:final}

We have addressed the problem of discrete Bayesian network estimation from incomplete data by maximizing a penalized log-likelihood scoring function. The essential step in our approach is replacing the usual log-likelihood with a sum of node-average log-likelihoods, the so-called NAL. We have motivated our decision with a more efficient utilization of the available data and have shown the connection between NAL optimization and EM algorithm. Although our setup allows the missing data distribution to be arbitrary as long as the true DAG structure remains identifiable, the latter rarely holds for general MAR models. As we have demonstrated however, in MCAR settings, the identifiability of the set of independence relations, which characterizes all networks equivalent to the true one, is always guaranteed. 
We have shown, Theorems \ref{th:consistency}, that in presence of missing values the NAL-based estimator \eqref{eq:G_hat} requires more stringent conditions on the penalization parameter $\lambda_n$ to achieve consistency than in the complete data case. The discrepancy is due to the fact that in NAL each node may utilize different data subset for estimation thus reducing the overall convergence rate. Although the theorem guarantees consistency for penalties in a continuous range, choosing an optimal penalization parameter that performs well in finite sample settings is an open problem deserving further investigation.

The scope of this article has been limited to discrete BNs for which self-contained proofs of the results have been derived. It is straightforward however to apply NAL-based estimation to other classes of parametric BNs, such as linear Gaussian networks. Then, as long as for any $G$, $G_0\subset G$, $l(G|D_n)-l(G_0|D_n)$ is  $O_p(n^{-1/2})$, in the missing, and $O_p(n^{-1})$, in the complete data case, Theorem \ref{th:consistency}, with some technical modification of the proofs, seems to remain valid. Formulating identifiability and consistency for available case analysis in more general graphical model settings is thus a subject of continuing interest. 


\section{Proofs}
\label{appendixA}

The next lemma is instrumental in the proof of Proposition \ref{th:nal_increase}.
It shows that the (population) node log-likelihood $l(X|A)$, $X\in\{X_i\}_{i=1}^N$, $A\subset \{X_i\}_{i=1}^N$, is an increasing function of $A$ with respect to the set inclusion operation. In complete data settings, this result is better know as non-negativity of the Kullback-Leibler divergence. 

\begin{lem}
\label{lemma:lincrease}
For any $A,B \subset {\bf X}$ and $X\in {\bf X}$ such that $Z_B$ is independent of $(X, A, B)$ given $(Z=1,Z_A=1)$, we have $l(X|A) \le l(X|A,B)$. The inequality is strict if $P(X|A,Z=1,Z_A=1) \ne P(X|A,B,Z=1,Z_A=1)$.
\end{lem}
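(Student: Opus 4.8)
The plan is to put the two node log-likelihoods over a common conditioning event and then invoke non-negativity of the Kullback--Leibler divergence. Using the generic notation $X$, $A$, $A\cup B$ in place of $X_i$, $Pa_i$, and writing $Z$ for $Z_X$, set $E=\{Z=1,\,Z_A=1\}$. By the population definition \eqref{eq:loglikG_G0},
\begin{equation}\label{eq:pf_lA}
l(X|A)=\sum_{a}P(A=a\mid E)\sum_{x}P(X=x\mid A=a,E)\log P(X=x\mid A=a,E),
\end{equation}
which is the observed population negative entropy of $X$ given $A$, computed under the law $P(\cdot\mid E)$. The analogous formula for $l(X|A,B)$ is the negative entropy of $X$ given $(A,B)$, but computed under $P(\cdot\mid E')$ with the larger conditioning event $E'=E\cap\{Z_B=1\}$ (recall $Z_{A\cup B}=Z_AZ_B$, so the definition of $l(X|A,B)$ conditions on $Z_B=1$ as well).

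The first substantive step is to eliminate this mismatch of conditioning events. The hypothesis that $Z_B$ is independent of $(X,A,B)$ given $E$ yields $P(X=x,A=a,B=b\mid E')=P(X=x,A=a,B=b\mid E)$ for all $x,a,b$, hence every conditional probability in the definition of $l(X|A,B)$ may be rewritten with $E$ in place of $E'$. Writing both \eqref{eq:pf_lA} and the reduced form of $l(X|A,B)$ as sums against the common joint law $P(X=x,A=a,B=b\mid E)$ and subtracting, the difference $l(X|A,B)-l(X|A)$ collapses to
\begin{equation}\label{eq:pf_kl}
\sum_{a,b}P(A=a,B=b\mid E)\sum_{x}P(X=x\mid A=a,B=b,E)\log\frac{P(X=x\mid A=a,B=b,E)}{P(X=x\mid A=a,E)},
\end{equation}
a $P(\cdot\mid E)$-average of divergences $D_{\mathrm{KL}}\bigl(P(X\mid A=a,B=b,E)\,\big\|\,P(X\mid A=a,E)\bigr)\ge 0$; equivalently this is the conditional mutual information $I(X;B\mid A,E)\ge 0$, so $l(X|A)\le l(X|A,B)$. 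For the strictness claim, equality in \eqref{eq:pf_kl} would force every divergence carrying positive weight $P(A=a,B=b\mid E)>0$ to vanish, i.e. $P(X\mid A=a,B=b,E)=P(X\mid A=a,E)$ for all such $(a,b)$; this is precisely $P(X\mid A,Z=1,Z_A=1)=P(X\mid A,B,Z=1,Z_A=1)$, so if those two conditionals differ the inequality is strict.

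I expect the only genuinely delicate point to be the bookkeeping in the reduction step --- checking that the stated independence is exactly what is needed to pass from $E'$ (forced by the definition of $l(X|A,B)$) to $E$, and keeping track of the tacit positivity $P(E')>0$, without which $l(X|A,B)$ is not defined in the first place. Once both quantities sit over the same base measure $P(\cdot\mid E)$, the remainder is the textbook fact that conditioning does not increase entropy, i.e.\ Gibbs' inequality applied conditionally, together with its standard equality case. In complete-data settings $E$ and $E'$ have probability one and the statement reduces to non-negativity of the Kullback--Leibler divergence, as noted just before the lemma.
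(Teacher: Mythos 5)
Your proposal is correct and follows essentially the same route as the paper: use the independence of $Z_B$ from $(X,A,B)$ given $(Z=1,Z_A=1)$ to bring both node log-likelihoods onto the common conditioning event, then apply the conditional Gibbs/Jensen inequality (the paper phrases it as convexity of $t\mapsto t\log t$, you as non-negativity of the averaged KL divergences, i.e.\ $I(X;B\mid A,E)\ge 0$, which is the same fact), with the same treatment of the equality case for strictness.
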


\begin{proof}[Proof]
Let
$$
\theta_{ka} \equiv P(X=k|A=a, Z=1, Z_A=1).
$$
By assumption
$$
\theta_{kab} \equiv P(X=k|A=a, B=b, Z=1, Z_A=1, Z_B=1)
$$
$$
= P(X=k|A=a, B=b, Z=1, Z_A=1)
$$ 
We can therefore write the expression 
$
\theta_{ka} = \sum_{b\in B} P(B=b|A=a, Z=1, Z_A=1) \theta_{kab}. 
$
By the convexity of the function $t\mapsto t\log(t)$ we have
$$
\theta_{ka}\log(\theta_{ka}) 
\le \sum_{b\in B} P(B=b|A=a,Z=1,Z_A=1) \theta_{kab} \log(\theta_{kab}), 
$$
and the claim follows from 
$$
l(X|A) = \sum_{a\in A} P(A=a|Z=1,Z_A=1) \sum_{k\in X} \theta_{ka}\log(\theta_{ka}) 
$$
$$
\le \sum_{a\in A, b\in B} P(B=b|A=a,Z=1,Z_A=1)P(A=a|Z=1,Z_A=1) \sum_{k\in X} \theta_{kab} \log(\theta_{kab}) 
$$
$$
= \sum_{a\in A, b\in B} P(A=a, B=b|Z=1,Z_A=1,Z_B=1) \sum_{k\in X} \theta_{kab} \log(\theta_{kab}) 
 = l(X|A,B).
$$
The last inequality is strict if $P(X|A,Z=1,Z_A=1) \ne P(X|A,B,Z=1,Z_A=1)$. 
\end{proof}


\begin{proof}[{\bf Proof of Proposition \ref{th:nal_increase}}]

{\raggedleft \bf Part (i)} \newline

Let $G_0\subset G$. 
The MCAR condition on ${\bf Z}$ and LMP imply that for every $i$ and $Y\subset {\bf X}$ such that $Y\prec_{G_0} X_i$ and $Y\cap Pa_i^0 = \emptyset$, the following two conditions hold 
\begin{enumerate}
\item[(i)]{$(Y,Z_Y)$ is independent of $X_i$ given $(Pa_i^0,Z_i=1,Z_{Pa_i^0}=1)$. 
}
\item[(ii)]{$Z_Y$ is independent of $Pa_i^0$ given $(Z_i=1,Z_{Pa_i^0}=1)$. 
}
\end{enumerate}
For each $i$, since $Pa_i^0\subset Pa_i$, $Pa_i\backslash Pa_i^0 \prec X_i$, by (i) we have that $(Pa_i\backslash Pa_i^0, Z_{Pa_i\backslash Pa_i^0})$ and $X_i$ are independent conditionally on $Pa_i^0$, 
and therefore for each $j\in Pa_i^0$ and $j'\in Pa_i\backslash Pa_i^0$, 
$$
\theta_{i,k(jj')}(G|G_0) = 
P(X_i=k|Pa_i=(jj'),Z_i=1,Z_{Pa_i}=1) 
$$
$$
 = P(X_i=k|Pa_i^0=j, Z_i=1,Z_{Pa_i^0}=1) = \theta_{i,kj}^0.
$$ 
Moreover, by (ii) applied to $Z_{Pa_i\backslash Pa_i^0}$ and $Pa_i^0$
$$
\sum_{j'\in Pa_i\backslash Pa_i^0} \theta_{i,jj'} = P(Pa_i^0=j|Z_i=1, Z_{Pa_i}=1) 
$$
$$
= P(Pa_i^0=j|Z_i=1, Z_{Pa_i^0}=1) = \theta_{i,j}^0 , 
$$
which implies 
$$
l(X_i|Pa_i) = \sum_{j\in Pa_i^0}\sum_{j'\in Pa_i\backslash Pa_i^0} \theta_{i,jj'} \sum_{k\in X_i} \theta_{i,k(jj')} \log \theta_{i,k(jj')}  
$$
$$
= \sum_{j\in Pa_i^0} \sum_{k\in X_i} \theta_{i,k(j)} \log \theta_{i,k(j)}  
= l(X_i|Pa_i^0). 
$$
We thus have $l(G|G_0) = l(G_0)$. 

{\raggedleft \bf Part (ii) and Part (iii)} \newline
By the definition of $l(G|G_0)$ in \eqref{eq:loglikG_G0} and some summation manipulations we obtain 
$$
l(G|G_0) = \sum_{i=1}^N \sum_{x_{Pa_i}\in Pa_i} \sum_{x_i \in X_i} P_0(X_i = x_i, Pa_i = x_{Pa_i}) \log P_{0}(X_i=x_i | Pa_i = x_{Pa_i}) 
$$
$$
= \sum_{i=1}^N \sum_{x \in {\bf X}} P_0({\bf X}=x) \log P_{0}(X_i=x_i | Pa_i = x_{Pa_i}) 
= \sum_{x \in {\bf X}} P_0(x) \log P_{G|G_0}(x), 
$$
where $x$ indexes the states of ${\bf X}$. 
Since $\sum_{x} P_0(x) = 1$, $\sum_{x} P_{G|G_0}(x) = 1$ and the $\log$-function is concave, we have 
$$
l(G|G_0) = \sum_{x} P_0(x) \log P_{G|G_0}(x)  \le \sum_{x} P_0(x) \log P_0(x) = l(G_0),
$$
with equality that is achieved only when $P_{G|G_0} = P_0$. 

\end{proof}


\begin{proof}[{\bf Proof of Corollary \ref{cor:ident_inorder}}]

Let $G_0\in\mathcal{G}$ be DBN with a node order $\Omega$ and $\mathcal{G}$ be a set of DAGs compatible with $\Omega$. We need to show that $l(G|G_0)<l(G_0)$ for all $G\in\mathcal{G}$ for which $G_0\nsubseteq G$.

Note that for any $G\in\mathcal{G}$, $G\cup G_0$ is also a DAG compatible with $\Omega$ and by Proposition \ref{th:nal_increase}, 
$$
l(X_i|Pa_i) \le l(X_i|Pa_i\cup Pa_i^0) = l(X_i|Pa_i^0)
$$
for all $i$. Moreover, because $G_0\nsubseteq G$, there is an $i$ such that $Pa_i = (Pa_i^0\backslash Y)$ for $Y$, $\emptyset\ne Y \subset Pa_i^0$. Since $P(X_i|Pa_i, Z_i=1, Z_{Pa_i}=1) \ne P(X_i|Pa_i^0, Z_i=1, Z_{Pa_i^0}=1)$, because by definition $G_0$ is a minimal DAG compatible with $P_0$, by Lemma \ref{lemma:lincrease}, $l(X_i|Pa_i^0\backslash Y) < l(X_i|Pa_i^0)$, implying $l(G|G_0)<l(G_0)$.  
\end{proof}


The next result is used in the proof of Lemma \ref{th:l_unif_conv}. 

\begin{lem}
\label{lemma:phi_log_theta_conv}
Let $\hat\phi_n = \phi + O_p(n^{-1/2})$ and $\hat\theta_n = \theta + O_p(n^{-1/2})$ for $\theta > 0$. Then 
\begin{equation}
\hat\phi_n \log(\hat\theta_n) - \phi \log(\theta) = O_p(n^{-1/2}). 
\label{eq:phi_log_theta_conv}
\end{equation}
\end{lem}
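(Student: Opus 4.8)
The plan is to reduce \eqref{eq:phi_log_theta_conv} to two elementary facts: that $\log$ is differentiable (hence locally Lipschitz) at the strictly positive point $\theta$, and that a product of a bounded-in-probability sequence with an $O_p(n^{-1/2})$ sequence is again $O_p(n^{-1/2})$. First I would write the difference as a telescoping sum,
$$
\hat\phi_n \log(\hat\theta_n) - \phi\log(\theta) = \hat\phi_n\bigl(\log(\hat\theta_n) - \log(\theta)\bigr) + \bigl(\hat\phi_n - \phi\bigr)\log(\theta).
$$
The second term is immediately $O_p(n^{-1/2})$, since $\log(\theta)$ is a fixed finite constant (using $\theta > 0$) and $\hat\phi_n - \phi = O_p(n^{-1/2})$ by hypothesis.

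For the first term I would control the two factors separately. Since $\hat\phi_n = \phi + O_p(n^{-1/2})$, in particular $\hat\phi_n = O_p(1)$ (it converges in probability to the constant $\phi$, hence is bounded in probability). For the factor $\log(\hat\theta_n) - \log(\theta)$, I would argue that because $\theta > 0$ and $\hat\theta_n \to_p \theta$, with probability tending to one $\hat\theta_n$ lies in the interval $[\theta/2, 2\theta]$, on which $\log$ has derivative bounded by $2/\theta$; a mean value theorem (or first-order Taylor) estimate then gives $|\log(\hat\theta_n) - \log(\theta)| \le (2/\theta)\,|\hat\theta_n - \theta|$ on that event, so $\log(\hat\theta_n) - \log(\theta) = O_p(n^{-1/2})$. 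Multiplying the $O_p(1)$ factor by this $O_p(n^{-1/2})$ factor yields that the first term is $O_p(n^{-1/2})$, and adding the two contributions completes the argument.

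The only mildly delicate point — and the one I would state carefully rather than wave at — is the handling of the event where $\hat\theta_n$ is close to $0$ (or even negative, if $\hat\theta_n$ is not constrained to be a probability), where $\log(\hat\theta_n)$ blows up. The resolution is standard: the $O_p$ statements are statements about sequences that are bounded in probability, so one works on the high-probability event $A_n = \{\hat\theta_n \in [\theta/2, 2\theta]\}$, whose complement has probability $o(1)$, and absorbs the bad event into the definition of $O_p$. In the present application $\hat\theta_n$ will be one of the empirical conditional probabilities $\hat\theta_{i,kj}$ and the relevant limit $\theta_{i,kj}(G|G_0)$ is assumed strictly positive, so this event indeed has probability tending to one; I would simply remark that no issue arises because $\theta > 0$ is part of the hypothesis.
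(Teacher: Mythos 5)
Your proof is correct and follows essentially the same route as the paper: the same decomposition into $(\hat\phi_n-\phi)\log(\theta)$ plus $\hat\phi_n\bigl(\log(\hat\theta_n)-\log(\theta)\bigr)$, with the second factor handled by a mean value/Taylor estimate for $\log$ near $\theta>0$. Your explicit treatment of the event $\{\hat\theta_n\in[\theta/2,2\theta]\}$ is just a more careful spelling-out of the step the paper compresses into ``$\hat\phi_n/\eta_n\to_p\phi/\theta<\infty$.''
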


\begin{proof}[Proof]
By applying Taylor expansion to the logarithm function, we can write 
$$
\hat\phi_n \log(\hat\theta_n) - \phi \log(\theta) = (\hat\phi_n-\phi) \log(\theta) + \frac{\phi_n}{\eta_n}(\hat\theta_n - \theta)), 
$$
for some $\eta_n$ between $\hat\theta_n$ and $\theta$. Since $\phi_n/\eta_n \to_p \phi/\theta < \infty$, the claims follows from the assumptions. 
\end{proof}


\begin{proof}[{\bf Proof of Lemma \ref{th:l_unif_conv}}] 
Let $G\in\mathcal{G}$ has parent sets $Pa_i$. 
For all $i=1,...,N$, $j\in Pa_i$ and $k\in X_i$, we define 
$$
\xi_{i,kj} \equiv \hat{\theta}_{i,j} \hat{\theta}_{i,kj} \log \hat{\theta}_{i,kj} - \theta_{i,j} \theta_{i,kj} \log \theta_{i,kj},
$$
where $\theta_{i,j} = \theta_{i,j}(G|G_0)$ and $\theta_{i,kj} = \theta_{i,kj}(G|G_0)$, and $\hat\theta_{i,j}$ and $\hat\theta_{i,kj}$ are the corresponding estimates. In this notation we have 
$$
l(X_i|Pa_i, D_n)-l(X_i|Pa_i, G_0) = \sum_{i=1}^N \sum_{j\in Pa_i} \sum_{k\in X_i} \xi_{i,kj}. 
$$

Note that when either $\theta_i = 0$, $\theta_{i,j} = 0$ or $\theta_{i,kj}=0$ holds, then the state $(i,kj)$ will be unobservable and $\xi_{i,kj} = 0$. We thus may assume without loss of generality that $\theta_i>0$, $\theta_{i,j}>0$ and $\theta_{i,kj} >0$ for all $i,j$ and $k$. Moreover, 
by Hoeffding's inequality, for all $k,j, \epsilon>0$, $P(|\hat\theta_{i,kj}-\theta_{i,kj}|\ge\epsilon)\le 2\exp(-2n_{i,j}\epsilon^2)$ and hence $\hat\theta_{i,j} = \theta_{i,j} + O_p(n^{-1/2})$ and $\hat\theta_{i,kj} = \theta_{i,kj} + O_p(n^{-1/2})$. We can therefore apply Lemma \ref{lemma:phi_log_theta_conv} to $\hat{\theta}_{i,j} \hat{\theta}_{i,kj}$ and $\hat{\theta}_{i,kj}$ to infer that $\xi_{i,kj} = O_p(n^{-1/2})$, from which the claim follows. 

\end{proof}

The following two lemmas are essential for the proof of Theorem \ref{th:consistency}. The first one extends Lemma \ref{lemma:phi_log_theta_conv}.

\begin{lem}
\label{lemma:theta_log_theta_conv}
Let for $m=1,...,k$, $\hat\theta_m = \theta_0 + O_p(n^{-1/2})$, $\theta_0 > 0$ and 
$\hat\theta = \sum_{m}\gamma_m\hat\theta_m$, for $\gamma_m \ge 0$ such that $\sum_{m}\gamma_m = 1$. 
Then 
\begin{equation}
\Delta \equiv \sum_{m} \gamma_m\hat\theta_m \log(\hat\theta_m) - \hat\theta\log(\hat\theta) = O_p(n^{-1}). 
\label{eq:theta_log_theta_conv}
\end{equation}
\end{lem}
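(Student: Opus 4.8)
The plan is to recognize $\Delta$ as a second-order (Bregman-type) remainder for the strictly convex function $f(t) \equiv t\log t$, which is precisely what produces the extra factor $n^{-1/2}$ over Lemma~\ref{lemma:phi_log_theta_conv}: there a first-order Taylor expansion yielded $O_p(n^{-1/2})$, whereas here the first-order terms cancel by convex-combination bookkeeping and only the quadratic remainder survives. Note that $f$ is infinitely differentiable on $(0,\infty)$ with $f'(t) = \log t + 1$ and $f''(t) = 1/t > 0$.

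First I would apply Taylor's theorem with Lagrange remainder to each $f(\hat\theta_m)$, expanded about the point $\hat\theta$: for every $m = 1,\dots,k$ there is $\eta_m$ lying between $\hat\theta_m$ and $\hat\theta$ with
$$
f(\hat\theta_m) = f(\hat\theta) + f'(\hat\theta)\,(\hat\theta_m - \hat\theta) + \tfrac12 f''(\eta_m)\,(\hat\theta_m - \hat\theta)^2 .
$$
Multiplying by $\gamma_m \ge 0$ and summing over $m$, the constant term reproduces $f(\hat\theta)$ since $\sum_m \gamma_m = 1$, and the linear term vanishes because $\sum_m \gamma_m (\hat\theta_m - \hat\theta) = \hat\theta - \hat\theta = 0$. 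Hence
$$
\Delta = \frac12 \sum_{m=1}^{k} \gamma_m\, f''(\eta_m)\,(\hat\theta_m - \hat\theta)^2 = \frac12 \sum_{m=1}^{k} \frac{\gamma_m\,(\hat\theta_m - \hat\theta)^2}{\eta_m} \;\ge\; 0 .
$$

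It then remains to bound the two stochastic ingredients of each summand. Since $\hat\theta_m - \theta_0 = O_p(n^{-1/2})$ for every $m$ and $\sum_m\gamma_m = 1$ with $\gamma_m\ge 0$, the triangle inequality gives $\hat\theta - \theta_0 = O_p(n^{-1/2})$ as well, and therefore $\hat\theta_m - \hat\theta = O_p(n^{-1/2})$, so $(\hat\theta_m - \hat\theta)^2 = O_p(n^{-1})$. For the denominator, $\eta_m \ge \min(\hat\theta_m,\hat\theta) \to_p \theta_0 > 0$, so $P(\eta_m > \theta_0/2) \to 1$ and $1/\eta_m = O_p(1)$. As $\gamma_m \in [0,1]$ and the sum has a fixed finite number of terms, each summand is $O_p(n^{-1})$ and hence so is $\Delta$, which is \eqref{eq:theta_log_theta_conv}.

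The one point needing a little care is the control of $1/\eta_m$ near the singularity of $f''$ at the origin; this is harmless because $\theta_0$ is bounded away from zero, so on an event of probability tending to one all the $\eta_m$ exceed $\theta_0/2$, and off that event the $O_p$ statement is unaffected. No other step presents a genuine obstacle.
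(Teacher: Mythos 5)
Your proof is correct and takes essentially the same route as the paper's: a second-order Taylor expansion about $\hat\theta$, exploiting the convex-combination structure so that only the quadratic remainder survives, with $\eta_m$ bounded away from zero and $(\hat\theta_m-\hat\theta)^2=O_p(n^{-1})$. The only cosmetic difference is that you expand $f(t)=t\log t$ so the linear terms cancel identically, whereas the paper expands $\log t$ and removes the residual linear contribution via the identity $\sum_m\gamma_m\hat\theta_m^2-\hat\theta^2=\sum_m\gamma_m(\hat\theta_m-\theta_0)^2-(\hat\theta-\theta_0)^2$.
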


\begin{proof}[Proof] Note that $\hat\theta^m$, $\hat\theta$ and $\gamma_m$ are all considered to be random variables. 
By applying Taylor expansion to the logarithm function, we can write 
$$
\hat\theta_m \log(\hat\theta_m) = \hat\theta_m [ \log(\hat\theta) + \frac{1}{\hat\theta}(\hat\theta_m - \hat\theta) - \frac{1}{2\eta_m^2}(\hat\theta_m-\hat\theta)^2], 
$$
for some $\eta_m$ between $\hat\theta_m$ and $\hat\theta$. 
Since $\eta_m \to_p \theta_0 > 0 $, by assumption, we have $(\hat\theta_m-\hat\theta)^2/\eta_m^2 = O_p(n^{-1})$. After some algebra we obtain  
$$
\Delta = \sum_{m} \gamma_m [ \hat\theta_m \log(\hat\theta_m) - \hat\theta_m\log(\hat\theta) ]
= \frac{1}{\hat\theta} [\sum_m \gamma_m \hat\theta_m^2 - \hat\theta^2] + O_p(n^{-1}) 
$$
$$
 = \frac{1}{\hat\theta} [\sum_m \gamma_m (\hat\theta_m - \theta_0)^2 - (\hat\theta-\theta_0)^2] + O_p(n^{-1}). 
$$
Finally, since $1/\hat\theta \to_p 1/\theta_0 < \infty$ and $(\hat\theta-\theta_0)^2 = O_p(n^{-1})$, \eqref{eq:theta_log_theta_conv} follows. 
\end{proof}

The next lemma presents a central limit result for difference between sample and sub-sample averages. It consequently establishes a variability rate of $n^{-1/2}$ for such differences. 

\begin{lem}
\label{lemma:draw_without_replace_clt} 
Let for each $n>0$, $x_n^1,...,x_n^n$ be i.i.d. random variables with mean $\mu_n$ and variance $\sigma_n^2$ such that $\sigma_n^2 \to \sigma^2 < \infty$. Let for some fixed $\beta\in(0,1)$, $k_n \sim Binom(\beta, n)$ and $\alpha_n$ be a random draw without replacements of $k_n$ elements from the set $\{1,...,n\}$. Then for 
$$
S_n = \frac{1}{k_n} \sum_{i \in \alpha_n} x_i - \frac{1}{n} \sum_{i=1}^n x_i, 
$$
for almost every sequence $\{\alpha_n\}_{n\ge 1}$, we have
\begin{equation}\label{eq:sn_convergence}
\sqrt{n}S_n \to_d \mathcal{N} (0, \frac{1-\beta}{\beta}\sigma^2).
\end{equation}
\end{lem}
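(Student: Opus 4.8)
The plan is to reduce $\sqrt n\,S_n$, conditionally on the missingness pattern, to a normalized sum of two independent blocks and then apply an ordinary central limit theorem to each. Write $k_n=|\alpha_n|$ and let $\bar x_n^{obs}=k_n^{-1}\sum_{i\in\alpha_n}x_i$, $\bar x_n^{mis}=(n-k_n)^{-1}\sum_{i\notin\alpha_n}x_i$. The starting point is the elementary identity
$$
\frac1n\sum_{i=1}^n x_i=\frac{k_n}{n}\,\bar x_n^{obs}+\frac{n-k_n}{n}\,\bar x_n^{mis},
\qquad\text{so}\qquad
S_n=\frac{n-k_n}{n}\bigl(\bar x_n^{obs}-\bar x_n^{mis}\bigr),
$$
whence $\sqrt n\,S_n=\frac{n-k_n}{\sqrt n}\bigl(\bar x_n^{obs}-\bar x_n^{mis}\bigr)$.

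First I would condition on the sequence $\{\alpha_n\}_{n\ge1}$, which we take independent of the array $\{x_n^i\}$ in keeping with the MCAR setting. By Hoeffding's inequality $P(|k_n/n-\beta|>\epsilon)\le 2e^{-2n\epsilon^2}$ for every $\epsilon>0$, the probabilities are summable, and Borel--Cantelli gives $k_n/n\to\beta$ for almost every such sequence; this is the only feature of $\{\alpha_n\}$ used in the argument and is what the qualifier ``almost every sequence'' refers to. Fix such a sequence. Given $\alpha_n$, the conditional law of $(\bar x_n^{obs},\bar x_n^{mis})$ is that of the sample means of two disjoint i.i.d.\ blocks of sizes $k_n$ and $n-k_n$, so in particular $\bar x_n^{obs}$ and $\bar x_n^{mis}$ are conditionally independent and both block sizes tend to $\infty$ (here $\beta\in(0,1)$ is used).

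Then, setting $U_n=\sqrt{k_n}\,(\bar x_n^{obs}-\mu_n)$ and $V_n=\sqrt{n-k_n}\,(\bar x_n^{mis}-\mu_n)$, a Lindeberg--Feller central limit theorem yields $U_n\to_d\mathcal N(0,\sigma^2)$ and $V_n\to_d\mathcal N(0,\sigma^2)$, these being conditionally independent. Since
$$
\sqrt n\,S_n=\frac{n-k_n}{\sqrt{n\,k_n}}\,U_n-\sqrt{\frac{n-k_n}{n}}\,V_n
$$
with deterministic coefficients converging to $(1-\beta)/\sqrt\beta$ and $\sqrt{1-\beta}$, Slutsky's lemma gives $\sqrt n\,S_n\to_d \tfrac{1-\beta}{\sqrt\beta}N_1-\sqrt{1-\beta}\,N_2$ for independent $N_1,N_2\sim\mathcal N(0,\sigma^2)$, whose variance is $\bigl(\tfrac{(1-\beta)^2}{\beta}+(1-\beta)\bigr)\sigma^2=\tfrac{1-\beta}{\beta}\sigma^2$, i.e.\ \eqref{eq:sn_convergence}. (Equivalently one may avoid the explicit split: $n\,\mathrm{Var}(S_n\mid\alpha_n)=\tfrac{n-k_n}{k_n}\sigma_n^2\to\tfrac{1-\beta}{\beta}\sigma^2$, and $\sqrt n\,S_n$ is a linear statistic in the $x_i-\mu_n$ with asymptotically negligible weights, so the classical central limit theorem for linear forms applies.)

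The only genuine obstacle is the Lindeberg step: $\{x_n^i\}$ is a true triangular array, and $\sigma_n^2\to\sigma^2$ alone does not force $E\bigl[(x_n^1-\mu_n)^2\,1\{|x_n^1-\mu_n|>\epsilon\sqrt{k_n}\}\bigr]\to0$. This is remedied by assuming $\{(x_n^1-\mu_n)^2\}_n$ uniformly integrable (for instance $\sup_n E|x_n^1|^{2+\delta}<\infty$ for some $\delta>0$); in every application of the lemma in this paper the $x_n^i$ take values in a fixed finite set, hence are uniformly bounded, and the condition holds automatically. Everything else reduces to routine estimates.
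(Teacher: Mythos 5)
Your proof is correct, and it follows the same basic strategy as the paper --- condition on the missingness pattern, use the almost-sure convergence $k_n/n\to\beta$, exploit the conditional independence of the observed and unobserved blocks, and invoke a Lindeberg--Feller argument --- but the execution differs in two ways worth noting. The paper merges the two blocks into a single weighted triangular array ($y_n^j=\tfrac{n-k_n}{nk_n}x_i$ for $i\in\alpha_n$, $z_n^j=-\tfrac1n x_i$ for $i\notin\alpha_n$) and applies Lindeberg--Feller once to the whole array, computing $n\,\mathrm{Var}(S_n\mid k_n)=\tfrac{n-k_n}{k_n}\sigma_n^2\to\tfrac{1-\beta}{\beta}\sigma^2$ directly; you instead normalize each block separately, get two conditionally independent limits $\mathcal N(0,\sigma^2)$, and recombine with deterministic coefficients via Slutsky, which gives the same variance $\tfrac{(1-\beta)^2}{\beta}+(1-\beta)=\tfrac{1-\beta}{\beta}$. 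The two routes are essentially equivalent in difficulty; yours is slightly more modular, the paper's avoids the joint-convergence/Slutsky step. Two of your refinements are genuinely sharper than the paper's write-up: (a) you justify $k_n/n\to\beta$ a.s.\ by Hoeffding plus Borel--Cantelli, which is cleaner than the paper's bare appeal to the law of large numbers, since the $k_n$ for different $n$ need not be coupled as partial sums of one i.i.d.\ sequence; and (b) you correctly point out that the Lindeberg condition does not follow from $\sigma_n^2\to\sigma^2$ alone for a triangular array --- the paper's proof asserts the Lindeberg expectations vanish ``by the assumptions'' without justification --- and you patch this with uniform integrability of $(x_n^1-\mu_n)^2$, noting it holds automatically in the paper's applications where the $x_n^i$ are the finitely many values $l(x^s|Pa_i^0,\theta_0)$ and hence uniformly bounded. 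So your argument is sound and, on the Lindeberg point, more careful than the original.
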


\begin{proof}[Proof]
First, we rearrange the elements of $S_n$
$$
S_n = \frac{n-k_n}{nk_n}\sum_{i \in \alpha_n} x_i - \frac{1}{n}\sum_{i \notin \alpha_n} x_i.
$$
and define $\{y_n^j\}_{j=1}^{k_n}$ to be $\frac{n-k_n}{nk_n}x_{i}$ for $i\in \alpha_n$, and 
$\{z_n^j\}_{j=1}^{n-k_n}$ to be $-\frac{1}{n}x_i$ for $i\notin\alpha_n$. 
Hence $S_n = y_n^1+...+y_n^{k_n} + z_n^1+...+z_n^{n-k_n}$.

We are going to apply the Lindeberg-Feller CLT (see for example Th. 2.27 in \cite{vaart}) to the triangular sequence 
$$
\sqrt{n}y_n^1,...,\sqrt{n}y_n^{k_n},\sqrt{n}z_n^1,...,\sqrt{n}z_n^{n-k_n}.
$$ 
A key observation is that for each $n$, since $\alpha_n$ is a random draw without replacements, conditionally on $\alpha_n$, $y_n$'s and $z_n$'s are independent. Moreover, it is easy to verify that  
$$
E(S_n|k_n) = \frac{1}{n}(\frac{n-k_n}{k_n} k_n\mu - (n-k_n)\mu) = 0, 
$$
and
$$
Var(\sqrt{n}S_n|k_n) = \sum_{i=1}^{k_n} nVar(y_n^i) + \sum_{i=1}^{n-k_n} nVar(z_n^i) = \frac{1}{n}(\frac{(n-k_n)^2}{k_n} + (n-k_n))\sigma_n^2.
$$
By the law of large numbers, $k_n/n \to \beta$ almost surely, and therefore  
$$
Var(\sqrt{n}S_n|k_n) \to_{a.s.} \frac{1-\beta}{\beta}\sigma^2.
$$
It is left to verify that for any $\epsilon > 0$
$$
\sum_{i=1}^{k_n} nE(y_n^i)^21_{\{\sqrt{n}|y_n^i| > \epsilon\}} + \sum_{i=1}^{n-k_n} nE(z_n^i)^21_{\{\sqrt{n}|z_n^i| > \epsilon\}} \to 0, 
$$
almost surely for $\{\alpha_n\}_{n\ge 1}$. 
Indeed, the left-hand side sum equals
$$
\frac{(n-k_n)^2}{nk_n} E[(x_n)^21_{\{\frac{n-k_n}{k_n}|x_n|>\sqrt{n}\epsilon\}}]
+
\frac{n-k_n}{n} E[(x_n)^21_{\{|x_n|>\sqrt{n}\epsilon\}}]
$$
and, by the assumptions, the above expectations converge to $0$ for almost every sequence $\{k_n\}_{n\ge 1}$, and hence, for almost every $\{\alpha_n\}_{n\ge 1}$. 
Therefore the Lindeberg-Feller CLT is applicable and \eqref{eq:sn_convergence} holds. 

\end{proof}


\begin{proof}[{\bf Proof of Theorem \ref{th:consistency}}]
We shall first outline some key steps in the proof. The following notion will be useful: 
for a given sample $D_n$ and two DAGs $G_1=\{Pa_i^1\}$ and $G_2=\{Pa_i^2\}$, we say that the NALs $l(G_1|D_n)$ and $l(G_2|D_n)$ are estimated upon one and the same sample, if for every $i$, $l(X_i|Pa_i^1)$ and $l(X_i|Pa_i^2)$ are estimated from one and the same subsample of $D_n$, that is, for every $x^s\in D_n$, $(X_i,Pa_i^1)$ and $(X_i,Pa_i^2)$ are either both observed or both unobserved (missing) in $x^s$. When $(X_i,Pa_i)$ is observed in all $x^s\in D_n$, we say that $D_n$ is complete with respect to $(X_i,Pa_i)$. 

{\it
The essential problem of achieving consistent estimation is to decide between the true model $G_0$ and a more complex model $G_1$ in which $G_0$ is nested. According to Lemma \ref{th:l_unif_conv}, the NAL scores $l(G_1|D_n)$ and $l(G_0|D_n)$ both converge to $l(G_0)$ at a rate of $n^{-1/2}$ and so does their difference - this is essentially Lemma \ref{lemma:phi_log_theta_conv}. If therefore the scoring function penalty $\lambda_n$ converges to 0 at a slower than $n^{-1/2}$ rate, in the limit, $G_0$ would be preferred to $G_1$ as a less complex model. The latter condition is sufficient for both complete and missing data (claim (i) of the theorem). 
However, it turns out that when $l(G_1|D_n)$ and $l(G_0|D_n)$ are estimated upon one and the same sample, as in the complete data case, their difference converges at a faster rate of $n^{-1}$ - this is essentially due to the result of Lemma \ref{lemma:theta_log_theta_conv}. Then we can relax the necessary convergence rate of $\lambda_n$ and still achieve consistency (claim (ii)). The application of Lemma \ref{lemma:theta_log_theta_conv} crucially depends on the condition: for every node $X_i$, if a sample is complete with respect to $(X_i,Pa_i^0)$ so it is with respect to $(X_i,Pa_i^1)$. In MCAR settings the latter does not hold and the difference $l(G_1|D_n) - l(G_0|D_n)$ has a persistent variability of order $n^{-1/2}$, due to a central limit result, Lemma \ref{lemma:draw_without_replace_clt}. Consequently the condition on $\lambda_n$ to diminish at a rate slower than $n^{-1/2}$ becomes both sufficient and necessary (claim (iii)). 
}

The more formal proof follows. We shall prove consistency by verifying conditions (C1) and (C2) in Proposition \ref{th:mle_cons}. We first assume that  $G_1,G_2\in\mathcal{G}$ are such that $G_0\subseteq G_1$ and $G_0 \nsubseteq G_2$. 
Then, by the identifiability of $G_0$, Definition \ref{def:ident}, we have $l(G_2|G_0) < l(G_1|G_0)$. 
Moreover, by Lemma \ref{th:l_unif_conv}, regardless of the observation probability $\beta(\mathcal{G}) > 0$, $l(G_1|D_n)\to_p l(G_1|G_0)$ and $l(G_2|D_n)\to_p l(G_2|G_0)$, and hence, for $\delta = (l(G_1|G_0)-l(G_2|G_0))/2$, $P(l(G_1|D_n) > l(G_2|D_n) + \delta)\to 1$, as $n\to\infty$. The consistency condition (C1) therefore holds because the sequence $\lambda_n$ diminishes with $n$. 

As in the proof of Lemma \ref{th:l_unif_conv}, without loss of generality we may assume that for all $G\in\mathcal{G}$, $i=1,...,N$, $j\in Pa_i(G)$ and $k\in X_i$, $\theta_{i,j}(G|G_0) > 0$ and $\theta_{i,kj}(G|G_0) > 0$. 


{\raggedleft \bf Part (i)} \newline
Let now assume $G_0\subseteq G_1$, $G_0 \subseteq G_2$ and $h(G_1) < h(G_2)$. 
To verify the consistency condition (C2) we need to find the rate of convergence of the random variable $l(G_1|D_n) - l(G_2|D_n)$. This rate depends on whether the data is complete or not. 

By Lemma \ref{th:l_unif_conv}, $l(G|D_n) = l(G|G_0) + O_p(n^{-1/2})$ and since $l(G_1|G_0) = l(G_2|G_0)$, we have
$$
l(G_1|D_n) - l(G_2|D_n) = O_p(n^{-1/2}).
$$
The latter holds regardless of the observation probability $\beta(\mathcal{G}) > 0$. 
Therefore, the condition $\sqrt{n}\lambda_n \to \infty$ implies that the positive sequence $\lambda_n(h(G_2) - h(G_1))$ will overcome the likelihood difference $l(G_2|D_n) - l(G_1|D_n)$ with probability approaching 1, that is, 
$
P(S(G_1|D_n) > S(G_2|D_n)) \to 1 \textrm{, as } n\to\infty. 
$
This proves the first part of the theorem. \newline


{\raggedleft \bf Part (ii)} \newline
In case of complete data, $\beta(\mathcal{G})=1$, we shall obtain a faster convergence rate of $n^{-1}$ for the difference $l(G|D_n) - l(G_0|D_n)$, $G_0\subseteq G$, which will prove the second part $(ii)$ of the claim. 

We consider the sample average log-likelihood of the node $X_i$. Since $G_0\subseteq G$, we have that $Pa_i = Pa_i^0 \cup Y$ for some $Y\subset \{X_i\}_{i=1}^N$, $Y\cap Pa_i^0=\emptyset$. Observe that in $G_0$, $X_i$ cannot have descendants in $Y$. Indeed, if there is a directed path $X_i$ to $X_s\in Y$ in $G_0$, this path cannot be in $G$ also, for otherwise one would have the loop $X_i$ to $X_s$ to $X_i$ in $G$ and $G$ would not be a DAG. But if there is a path in $G_0$ that is not in $G$, then $G_0\nsubseteq G$, a contradiction.
Therefore, by LMP, $X_i$ and $Y$ are independent given $Pa_i^0$. 

In the usual notation, for $j\in Pa_i^0$, $m\in Y$ and $k\in X_i$, 
$\hat\theta_{i,kjm}$ denotes the estimator of $P(X_i=k|Pa_i^0 = j, Y=m)$ and  $\hat\theta_{i,jm}$ is the estimator of $P(Pa_i^0 = j, Y=m)$. 
We start with the expression 
$$
l(X_i|Pa_i, D_n) - l(X_i|Pa_i^0, D_n) 
$$
$$
= \sum_{m\in Y} \sum_{j\in Pa_i^0} \sum_{k\in X_i} \hat{\theta}_{i,jm} \hat{\theta}_{i,kjm}\log(\hat{\theta}_{i,kjm}) 
- \sum_{j\in Pa_i^0} \sum_{k\in X_i}  \hat{\theta}_{i,j} \hat{\theta}_{i,kj}\log(\hat{\theta}_{i,kj}) 
$$
\begin{equation}\label{eq:difflik_paext}
= \sum_{j\in Pa_i^0} \sum_{k\in X_i} \hat{\theta}_{i,j} \Delta_{i,kj},
\end{equation}
where 
$$
\Delta_{i,kj} \equiv 
\sum_{m\in Y} \frac{\hat{\theta}_{i,jm}}{\hat{\theta}_{i,j}} \hat{\theta}_{i,kjm}\log(\hat{\theta}_{i,kjm}) - \hat{\theta}_{i,kj}\log(\hat{\theta}_{i,kj}).
$$
By definition
$$
\hat{\theta}_{i,jm} = \frac{n_{i,jm}}{\sum_{j',m'} n_{i,j'm'}} \textrm{, }
\hat{\theta}_{i,j} = \frac{n_{i,j}}{\sum_{j'} n_{i,j'}} \textrm{ and }
\hat{\theta}_{i,kjm} = \frac{n_{i,kjm}}{n_{i,jm}}.
$$
By the sample completeness, we have $\sum_{m}n_{i,jm} = n_{i,j}$ and $\sum_{m}n_{i,kjm} = n_{i,kj}$, implying 
\begin{equation}\label{eq:gammasum1}
\sum_{m\in Y} \frac{\hat{\theta}_{i,jm}}{\hat{\theta}_{i,j}} \hat{\theta}_{i,kjm} 
= \frac{ \sum_{j'} n_{i,j'} }{ \sum_{j',m} n_{i,j'm} } \frac{\sum_{m} n_{i,kjm} }{n_{i,j}}  = \frac{n_{i,kj}}{n_{i,j}} = \hat{\theta}_{i,kj}.
\end{equation}
If we set $\gamma_m = \hat{\theta}_{i,jm}/\hat{\theta}_{i,j}$, then $\sum_m\gamma_m = 1$ and $\sum_m \gamma_m \hat\theta_{i,kjm} = \hat\theta_{i,kj}$. However, the latter are not guaranteed in incomplete settings because then we may have $\sum_{m}n_{i,jm} < n_{i,j}$ and(or) $\sum_{m}n_{i,kjm} < n_{i,kj}$. 

We can now apply Lemma \ref{lemma:theta_log_theta_conv} with $\gamma_m$, $\hat{\theta}_{i,kjm}$ and $\hat{\theta}_{i,kj}$ to infer that $\Delta_{i,kj} = O_p(n^{-1})$ and 
\begin{equation}\label{eq:lrate_plogp_beta1}
l(X_i|Pa_i, D_n) - l(X_i|Pa_i^0, D_n) = O_p(n^{-1}).
\end{equation}
Therefore $l(G|D_n) - l(G_0|D_n) = O_p(n^{-1})$ holds for all $G$ such that $G_0\subseteq G$. 

If both $G_1$ and $G_2$ contain $G_0$, it follows that $l(G_2|D_n) - l(G_1|D_n) = O_p(n^{-1})$. 
Therefore, the condition $n\lambda_n \to \infty$ implies that the positive sequence $\lambda_n(h(G_2) - h(G_1))$ will overcome the likelihood difference $l(G_2|D_n) - l(G_1|D_n)$ with probability approaching 1, which concludes the second part $(ii)$ of the theorem. \newline

\begin{rem}
\label{rem:on_convergence} 
\eqref{eq:lrate_plogp_beta1} holds even for incomplete samples $D_n$ if they satisfy the property: $(X_i,Pa_i)$ is complete in $D_n$ whenever $(X_i,Pa_i^0)$ is complete, or equivalently, $l(X_i|Pa_i, D_n)$ and $l(X_i|Pa_i^0, D_n)$ are calculated upon one and the same subsample of $D_n$. Lemma \ref{lemma:theta_log_theta_conv} is applicable in such cases because we still have  $\sum_{m}n_{i,jm} = n_{i,j}$ and $\sum_{m}n_{i,kjm} = n_{i,kj}$, and consequently, $\sum_m \gamma_m = 1$ and $\sum_m \gamma_m  \hat\theta_{i,kjm} = \hat\theta_{i,kj}$. Interestingly, if $Z$ is MCAR with $P(Z_Y|Z_i=1,Z_{Pa_i^0}=1)=a\in(0,1)$, then ${\sum_{m}n_{i,jm}}/{n_{i,j}} \to_{p} a$ and ${\sum_{m}n_{i,kjm}}/{n_{i,kj}} \to_{p} a$, and consequently $\sum_m \gamma_{m} \to_{p} 1$ and $\sum_m \gamma_m  \hat\theta_{i,kjm} - \hat\theta_{i,kj} \to_{p} 0$; this however is not enough for the claim in Lemma \ref{lemma:theta_log_theta_conv} to hold. 
\end{rem}


{\raggedleft \bf Part (iii)} \newline
The last part of the theorem claims the necessity of condition $(i)$ in case of incomplete sample that also satisfies Condition \ref{eq:assumption_incons}. Let $i$ be an unique node index of $G\in\mathcal{G}$ for which the condition holds, that is, for $j\ne i$, $Pa_j=Pa_j^0$, but $Pa_i\backslash Pa_i^0\ne\emptyset$. Without loss of generality we may assume that $h(G)=h(G_0)+1$ and that $D_n$ is complete with respect to $(X_i, Pa_i^0)$. 
We shall show that for $\lambda_n$ such that $\underline{\lim}\sqrt{n}\lambda_n < \infty$
\begin{equation}\label{eq:deltai_incons}
\overline{\lim}_{n\to\infty} P(l(X_i|Pa_i,D_n) - l(X_i|Pa_i^0,D_n) - \lambda_n > 0) > 0,
\end{equation}
which is equivalent to $S$ to be inconsistent.

Let $\tilde D_n=\{\tilde{x}^t\}_{t=1}^{\tilde n}$ be the $\tilde n$-subsample of $D_n$ for which $(X_i, Pa_i)$ is observed. Then we have $l(X_i|Pa_i,D_n) = l(X_i|Pa_i,\tilde D_n)$. 
Note that $\tilde n$ is random and $\tilde n \sim Binom(a, n)$, 
for $a = P(Z_{Pa_i}=1|Z_i=1,Z_{Pa_i^0}=1)\in(0,1)$, by Condition \ref{eq:assumption_incons}. 

For every probability table $\theta$, we denote 
$$
l(X_i|Pa_i^0, \theta, \tilde D_n) \equiv \frac{1}{\tilde n} \sum_{t=1}^{\tilde n} l(\tilde x^{t}|Pa_i^0, \theta),
$$
and
$$
l(X_i|Pa_i^0, \theta, D_n) \equiv \frac{1}{n} \sum_{s=1}^{n} l(x^s|Pa_i^0, \theta)
$$
where 
$$
l(x|Pa_i^0, \theta) \equiv \sum_{j\in Pa_i^0}\sum_{k\in X_i} 1_{x_i=k, pa_i^{0}=j} \log(\theta_{i,kj}).
$$
We have 
$$
l(X_i|Pa_i^0,D_n) = l(X_i|Pa_i^0,\hat\theta_n,D_n) \textrm{, for } \hat\theta_n = arg\max_{\theta} l(X_i|Pa_i^0,\theta,D_n)
$$
and 
$$
l(X_i|Pa_i^0,\tilde D_n) = l(X_i|Pa_i^0,\tilde\theta_n,\tilde D_n) \textrm{, for } \tilde\theta_n = arg\max_{\theta} l(X_i|Pa_i^0,\theta,\tilde D_n).
$$

Next, we show that the convergence rate of the difference  $l(X_i|Pa_i^0,\hat\theta_n,D_n)-l(X_i|Pa_i^0,\theta_0,D_n)$ is $n^{-1}$. The function $f(\theta)\equiv l(X_i|Pa_i^0,\theta,D_n)$ has continuous first and second derivatives in a neighborhood of $\theta_0$. 
Since $\frac{\partial f}{\partial\theta}|_{\hat\theta_n} = 0$ ($f$ has a maximum at $\hat\theta_n$), the Taylor's expansion of $f$ at $\theta=\hat\theta_n$ is 
$$
f(\theta_0) = f(\hat\theta_n) + 0.5(\theta_0-\hat\theta_n)^T \frac{\partial^2 f}{\partial\theta\partial\theta^T}|_{\theta_n^*} (\theta_0-\hat\theta_n) \textrm{, for } \theta_n^*\in[\theta_0,\hat\theta_n]. 
$$
Moreover, the Hessian at $\theta_0$ is bounded because $\theta_0$ is bounded away from $0$ and $\theta_n^*\to_p\theta_0$. Hence 
$$
\frac{\partial^2 f}{\partial\theta\partial\theta^T}|_{\theta_n^*} = \frac{\partial^2 f}{\partial\theta\partial\theta^T}|_{\theta_0} + O_p(1). 
$$
Because $||\theta_0-\hat\theta_n||^2 = O_p(n^{-1})$, we infer 
$$
\sqrt{n}(l(X_i|Pa_i^0,\hat\theta_n,D_n) - l(X_i|Pa_i^0,\theta_0,D_n)) = O_p(n^{-1/2}). 
$$
Similarly we have 
\begin{equation}
\label{eq:delta_theta_rate}
\sqrt{n}(l(X_i|Pa_i^0,\tilde\theta_n,\tilde D_n) - l(X_i|Pa_i^0,\theta_0, \tilde D_n)) = O_p(n^{-1/2}). 
\end{equation}

Due to the MCAR assumption, $Z_{Pa_i\backslash Pa_i^0}$ is independent of $(X_i,Pa_i^0)$. 
This and Condition \ref{eq:assumption_incons} imply that $\tilde D_n$ is obtained from $D_n$ by random draws without replacements. Therefore, we can apply Lemma \ref{lemma:draw_without_replace_clt} to the set $\{l(x^s|Pa_i^0,\theta_0)\}_{s=1}^n$ of i.i.d. random variables and 
its subset $\{l(\tilde x^t|Pa_i^0,\theta_0)\}_{t=1}^{\tilde n}$. We thus infer 
\begin{equation}\label{eq:ldiff_normal}
\sqrt{n} (l(X_i|Pa_i^0, \theta_0, \tilde D_n) - l(X_i|Pa_i^0, \theta_0, D_n)) \to_d \mathcal{N}(0, \gamma Var(l(X_i|Pa_i^0,\theta_0))),
\end{equation}
where $\gamma = (1-a)/a>0$. Also note that $Var(l(X_i|Pa_i^0,\theta_0)) > 0$ by the identifiability of $G_0$. 

Moreover, by Remark \ref{rem:on_convergence}, \eqref{eq:lrate_plogp_beta1} applied to $\tilde D_n$ yields 
\begin{equation}\label{eq:lrate_theta}
\sqrt{n}(l(X_i|Pa_i, \tilde D_n) - l(X_i|Pa_i^0, \tilde D_n)) = O_p(n^{-{1/2}}). 
\end{equation}

Finally, we consider the difference implicated in \eqref{eq:deltai_incons} 
$$
\sqrt{n}(l(X_i|Pa_i, \tilde D_n) - l(X_i|Pa_i^0, D_n)) - \sqrt{n}\lambda_n 
$$
apply \eqref{eq:lrate_theta}
$$
=  \sqrt{n}(l(X_i|Pa_i^0, \tilde\theta_n, \tilde D_n) - l(X_i|Pa_i^0, \hat\theta_n, D_n)) - \sqrt{n}\lambda_n + O_p(n^{-1/2})
$$
then use \eqref{eq:delta_theta_rate} 
$$
= \sqrt{n}(l(X_i|Pa_i^0, \theta_0, \tilde D_n) - l(X_i|Pa_i^0, \theta_0, D_n)) - \sqrt{n}\lambda_n + O_p(n^{-1/2}) =: T_n + O_p(n^{-1/2}). 
$$
Since $\underline{\lim}\sqrt{n}\lambda_n < \infty$, there is a subsequence $n'$ such that $\lim\sqrt{n'}\lambda_{n'} = \lambda_0 < \infty$ and for which, taking into account the convergence \eqref{eq:ldiff_normal}, we have 
$$
T_{n'} \to_d \mathcal{N}(-\lambda_0, \gamma Var(l(X_i|Pa_i^0,\theta_0))).
$$ 
Therefore $\lim P(T_{n'} > 0) \to 1-\Phi(\lambda_0/\sqrt{\gamma Var(l(X|Pa_i^0,\theta_0))}) > 0$, where $\Phi$ is the c.d.f. of the standard normal distribution.  
Hence \eqref{eq:deltai_incons} is verified and with this the proof of the theorem. 

\end{proof}

\section*{Acknowledgements}

This work was supported by NIH grant K99LM009477 from the National Library
Of Medicine. The content is solely the responsibility of the author and
does not necessarily represent the official views of the National Library
Of Medicine or the National Institutes of Health.
The author thanks an anonymous reviewer whose comments greatly improved the paper and who suggested the 2-node simulated example in Section \ref{ch:experiments}, and also Peter Salzman for many stimulating discussions.

\end{document}